\newtheorem{theorem}{Theorem}
\newtheorem{lemma}[theorem]{Lemma}
\newtheorem{assumption}{Assumption}
\newtheorem{definition}{Definition}
\newtheorem{remark}{Remark}
\title{\LARGE \bf
Perfect Tracking of Time-Varying Optimum by Extremum Seeking 
}
\author{Cemal Tugrul Yilmaz, Mamadou Diagne and Miroslav Krstic
\thanks{C. T. Yilmaz, M. Diagne and M. Krstic are with Department of Mechanical and Aerospace Engineering, University of California, San Diego, La Jolla, CA, USA.
        {\tt\small cyilmaz@ucsd.edu, mdiagne@ucsd.edu, krstic@ucsd.edu}}
}
\begin{document}

\maketitle
\thispagestyle{empty}
\pagestyle{empty}

\begin{abstract}

This paper introduces extremum seeking (ES) algorithms designed to achieve perfect tracking of arbitrary time-varying extremum. In contrast to classical ES approaches that employ constant frequencies and controller gains, our algorithms leverage time-varying parameters, growing either asymptotically or exponentially, to achieve desired convergence behaviors. Our stability analysis involves state transformation, time-dilation transformation, and Lie bracket averaging. The state transformation is based on the multiplication of the input state by asymptotic or exponential growth functions. The time transformation enables tracking of the extremum as it gradually converges to a constant value when viewed in the dilated time domain. Finally, Lie bracket averaging is applied to the transformed system, ensuring practical uniform stability in the dilated time domain as well as asymptotic or exponential stability of the original system in the original time domain. We validate the feasibility of these designs through numerical simulations.

\end{abstract}


\section{Introduction} \label{intro}

Extremum seeking (ES) is a powerful model-free optimization technique with a rich history of theoretical research \cite{krstic2000stability, scheinker2024100, durr2013lie, liu2012stochastic} and practical applications \cite{scheinker2021extremum, dochain2011extremum, zhang2007source}.
Most schemes in the ES literature assume that the cost function remains constant and aim to guide the input signal  to ultimately  discover  an unknown fixed optimum. However, this assumption is not always valid in practical situations. Real-world processes often face changing external conditions, causing the \emph{a priori} fixed optimal input value to shift over time. For instance, renewable energy systems operate in fluctuating weather conditions and varying energy demand, necessitating adjustments to controller parameters to maximize power generation \cite{krstic2014extremum,moura2013lyapunov}. In mineral processing, maintaining optimal air recovery by adjusting aeration rates is essential \cite{wepener2023extremum}. In $\text{CO}_2$ heat pump systems, the efficiency, represented by the coefficient of performance, varies due to environmental factors, requiring real-time optimization for maximum efficiency \cite{hu2015extremum}.

Several ES designs have emerged for plants exhibiting periodic steady-state outputs with constant optimizers. One such scheme, detailed in \cite{wang2000extremum}, focuses on minimizing the size of the limit cycle by incorporating a block for detection of the amplitude of the limit cycle and adjusting a controller parameter to reach a constant optimum value.
An ES is designed in \cite{guay2007flatness} to track an optimal orbit of a nonlinear dynamical system. This is achieved by exploiting the system's flatness property.
For the plants that exhibit periodic outputs of known periodicity, another ES controller with a moving-average filter is presented in \cite{haring2013extremum}. 
However, the periodic nature of the performance functions in these designs poses a limitation on their implementation in a broader range of applications, such as those found in \cite{krstic2014extremum,moura2013lyapunov,wepener2023extremum,hu2015extremum}.

Various papers have explored the optimization of arbitrary time-varying cost functions
using ES technique. 
Work on time-varying optimizers is pioneered in \cite{krstic2000performance}, where a generalized ES scheme is developed to track optimizers with known dynamics but unknown coefficients, employing the internal model principle.
In \cite{sahneh2012extremum}, a delay-based strategy is introduced to extract the gradient signal when dealing with slowly varying optima, which is later extended to dynamic systems with input constraints in \cite{ye2013extremum}. The authors in \cite{scheinker2012extremum} present  an ES approach aimed at achieving unknown reference tracking and stabilization for a class of unknown nonlinear systems using time-varying nonlinear high-gain feedback. A model-based adaptive ES algorithm is introduced in \cite{moshksar2015model} for a class of unstable nonlinear system with time-varying extremum. The authors in \cite{grushkovskaya2017extremum} and \cite{hazeleger2020extremum} establish results regarding the local and semi-global practical asymptotic stability of the extremum of a dynamic map. While \cite{hazeleger2020extremum} seeks a constant optimizer by ES to optimize time-varying steady-state plant performance,  \cite{grushkovskaya2017extremum} 
proves convergence towards a neighboorhood of a time-varying optimizer by extending the Lie bracket approximation method. The robustness of Lie bracket-based ES schemes with respect to time-varying parameters is investigated in \cite{labar2022iss} within the framework of input-to-state stability (ISS). Additionally, \cite{poveda2021fixed} studies ISS-like properties of fixed-time extremum seeking with a time-varying cost function. In contrast to prior results, which utilize high-frequency dither signals, \cite{michael2023gradient} develops a cooperative ES scheme for tracking moving sources without the need for dither signals. However, it is worth noting that all these papers achieve convergence to a small neighborhood of the time-varying optimum. To the best of the authors' knowledge, the perfect tracking of an unknown time-varying extremum remains unaddressed in the ES literature.

In this paper, we present a major  expansion in  the methodology of the technique introduced in \cite{yilmaz2023presc} and \cite{yilmaz2023exponential} for achieving unbiased convergence to a constant optimum, to address the problem of tracking time-varying optimum.
We develop two distinct ES algorithms for perfect tracking of the time-varying optimum, achieving both asymptotic and exponential convergence. These algorithms are referred to as asymptotic ES and exponential ES, respectively. To achieve these results, we depart from the typical ES approach used to track time-varying extrema, which relies on high-frequency sinusoids and constant high gains. Instead, in our ES scheme, we employ time-varying frequencies and controller gains that grow monotonically over time. In asymptotic ES, this growth occurs asymptotically fast, while in exponential ES, the growth is exponential. To analyze the stability of our approach, we perform a state transformation using a monotonically increasing function. Following this transformation, we apply a time-dilation transformation, which converts time-varying frequencies into constants and causes the derivative of the optimum to tend to zero in the dilated time domain. We then apply Lie bracket averaging to demonstrate the practical uniform stability of the transformed system. This, in turn, implies local asymptotic or exponential stability of the original system in original time domain, depending on the chosen algorithm, and guarantees asymptotic or exponential convergence of the output to the extremum, provided that the gains are properly selected.

This paper is organized as follows: In Section \ref{sec_prelim}, we provide essential stability notions and definitions that are referenced throughout the paper. Section \ref{sec_ps} outlines the problem formulation. The designs for asymptotic and exponential ES are introduced, along with a formal stability analysis, in Sections \ref{AsymES} and \ref{ExpES}, respectively. Section \ref{sec_numer} presents the numerical results, and finally, our paper concludes with Section \ref{sec_conc}.

\textit{Notation:} The $\delta$-neighborhood of a set $\mathcal{S} \subset \mathbb{R}^n$ is denoted by $U_{\delta}^{\mathcal{S}}=\{x \in \mathbb{R}^n : \inf_{z \in \mathcal{S}} |x-z|<\delta\}$. The Lie
bracket of two vector fields $f, g : \mathbb{R}^n \times \mathbb{R} \rightarrow \mathbb{R}^n$ with $f (\cdot,t), g( \cdot,t)$ being continuously differentiable is defined by $[f, g](x, t) := \frac{\partial g(x,t)}{\partial x}f(x,t)-\frac{\partial f(x,t)}{\partial x}g(x,t)$. The notation $e_i$ corresponds to the $i$th unit vector in $\mathbb{R}^n$. $\mathbb{R}^+$ denotes the set of non-negative real numbers.

\section{Preliminaries} \label{sec_prelim}
Consider a control-affine system
{
\setlength{\abovedisplayskip}{4pt}
\setlength{\belowdisplayskip}{4pt}
\begin{align}
    \dot{x}={}f_0(x,t)+\sum_{i=1}^q f_i(x,t) \sqrt{\omega} u_i(\omega t), \label{conaff}
\end{align}}where $x \in \mathcal{D} \subset \mathbb{R}^n$, $t \in [t_0, \infty)$, $t_0 \in \mathbb{R}^+$, 
$\omega>0$, $f_0: \mathcal{D} \times \mathbb{R}^+ \to \mathbb{R}^n$, $f_i: \mathcal{D} \times \mathbb{R}^+ \to \mathbb{R}^n, u_i : \mathbb{R}^+ \to \mathbb{R}$ for $i=1,\dots,q$. We compute the Lie bracket system corresponding to \eqref{conaff} as follows
{
\setlength{\abovedisplayskip}{4pt}
\setlength{\belowdisplayskip}{4pt}
\begin{align}
    \dot{\bar{x}}={}f_0(\bar{x},t)+\frac{1}{T} \sum_{{i=1}\atop{j=i+1}}^{q} [f_i, f_j](\bar{x},t) \int_0^T \int_0^{\sigma}  u_{j}(\sigma) u_{i}(\rho) d\rho d\sigma \label{conafflie}
\end{align}}where $\bar{x}(t_0)=x(t_0)$, $T>0$. 

We recall the following theorem from \cite{durr2013lie}, which  characterizes the notion of practical stability.
\begin{theorem}   \label{LieBracketAvThe}
Consider the system \eqref{conaff} and let the following conditions hold:
\begin{itemize}
    \item $f_i \in \mathcal{C}^2: \mathcal{D} \times \mathbb{R}^+ \to \mathbb{R}^n$ for $i=0,\dots,q$.
    \item The functions $|f_i(x,t)|$, $\left| \frac{\partial f_i(x,t)}{\partial t} \right|$, $\left| \frac{\partial f_i(x,t)}{\partial x} \right|$, $\left| \frac{\partial^2 f_j(x,t)}{\partial t \partial x} \right|$, $\left| \frac{\partial [f_j, f_k](x,t)}{\partial t} \right|$, $\left| \frac{\partial [f_j, f_k](x,t)}{\partial x} \right|$ are bounded on each compact set $x \in \mathcal{K} \subset \mathcal{D}$ uniformly in $t \geq t_0$, for $i=0,\dots,q$, $j=1,\dots,q$, $k=j,\dots,q$.
    \item The functions $u_j$ are continuous $T$-periodic with some $T>0$, and $\int_0^T u_j(\sigma)d\sigma=0$ for $j=1,\dots,q$.
\end{itemize}
If a compact set $\mathcal{S} \subset \mathcal{D}$ is locally (globally) uniformly asymptotically stable for \eqref{conafflie}, then $\mathcal{S}$ is locally (semi-globally) practically uniformly asymptotically stable for \eqref{conaff}.
\end{theorem}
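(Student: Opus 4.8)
The statement is quoted from \cite{durr2013lie}; the route I would follow to prove it splits into two parts --- a trajectory-approximation estimate on compact time intervals, and a Lyapunov transfer that upgrades asymptotic stability of \eqref{conafflie} into practical asymptotic stability of \eqref{conaff}. I would begin by rescaling time, setting $\tau = \omega t$, so that with $' := d/d\tau$ the system reads $x' = \tfrac{1}{\omega} f_0(x,\tfrac{\tau}{\omega}) + \tfrac{1}{\sqrt{\omega}} \sum_{i=1}^q f_i(x,\tfrac{\tau}{\omega}) u_i(\tau)$. Since each $u_i$ is $T$-periodic with zero mean, its antiderivative $U_i(\tau) := \int_0^\tau u_i(s)\,ds$ is bounded and $T$-periodic. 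The key device is the near-identity change of coordinates $x = z + \tfrac{1}{\sqrt{\omega}} \sum_{i=1}^q f_i(z,\tfrac{\tau}{\omega}) U_i(\tau)$: differentiating, substituting the dynamics, and Taylor-expanding $f_i(x,\cdot)$ about $z$, the $O(1/\sqrt{\omega})$ oscillatory terms cancel by construction, and the $O(1/\omega)$ terms collapse --- after an integration by parts converting the products $U_k u_i$ into the double-integral weights, and using $\int_0^T (U_i U_k)'\,d\sigma = 0$ --- to $\tfrac1\omega\big(f_0(z,\tfrac\tau\omega) + \sum_{i<j}[f_i,f_j](z,\tfrac\tau\omega)\,w_{ij}\big)$ with $w_{ij} := \tfrac1T\int_0^T\int_0^\sigma u_j(\sigma)u_i(\rho)\,d\rho\,d\sigma$, plus a remainder that the hypothesized $\mathcal{C}^2$ bounds (including those on $\partial_t f_i$ and $\partial_t\partial_x f_i$, whose contributions carry an extra factor $1/\omega$) make $O(\omega^{-3/2})$ uniformly on compact sets and uniformly in $t_0$.

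\textbf{Averaging and approximation.} The resulting $z$-equation, written back in $t$-time, is a vanishing-as-$\omega\to\infty$ perturbation of the Lie bracket system \eqref{conafflie}. A Gronwall estimate then yields: for every compact $\mathcal{K} \subset \mathcal{D}$, every $T' > 0$ and every $\varepsilon > 0$ there exists $\omega^\ast$ such that for $\omega \ge \omega^\ast$, whenever the solution $\bar x$ of \eqref{conafflie} stays in $\mathcal{K}$ on $[t_0, t_0+T']$ and $x(t_0) = \bar x(t_0)$, one has $|x(t) - \bar x(t)| < \varepsilon$ for all $t \in [t_0, t_0+T']$ --- uniformly in $t_0 \ge 0$, the uniformity stemming from the $T$-periodicity, hence boundedness, of the oscillatory factors in $\tau$.

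\textbf{Lyapunov transfer.} Because $\mathcal{S}$ is uniformly asymptotically stable for \eqref{conafflie}, a converse Lyapunov theorem furnishes a smooth $V$ and class-$\mathcal{K}_\infty$ functions $\alpha_1,\alpha_2,\alpha_3$ with $\alpha_1(\mathrm{dist}(x,\mathcal{S})) \le V(x,t) \le \alpha_2(\mathrm{dist}(x,\mathcal{S}))$, and with $V$ decreasing along \eqref{conafflie} at rate $-\alpha_3(\mathrm{dist}(x,\mathcal{S}))$. Fixing a window $T'$ from this decay and invoking the approximation estimate, one shows that, outside any prescribed neighborhood $U_\delta^{\mathcal{S}}$ and inside any prescribed sublevel set of $V$, the value of $V$ along \eqref{conaff} at $t+T'$ is strictly below its value at $t$ once $\omega$ is large enough; a standard attractivity-and-invariance argument then gives local practical uniform asymptotic stability of $\mathcal{S}$ for \eqref{conaff}, and the semi-global version when the stability of \eqref{conafflie} is global (choosing, in order, the target neighborhood, then the sublevel set / compact $\mathcal{K}$, then $\omega^\ast$).

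\textbf{Main obstacle.} The delicate step is the bookkeeping inside the near-identity transformation: one must verify that each term produced by differentiating $f_i(z,\tfrac\tau\omega) U_i(\tau)$ --- those arising from $\partial_z f_i\cdot z'$, from $\partial_t f_i$, and from the product of two oscillatory factors after re-expanding $f_i(x,\cdot)$ --- either reproduces exactly the Lie bracket average or is bounded by a constant (supplied by the boundedness hypotheses and independent of $t_0$) times a strictly negative power of $\omega$. Making this remainder estimate uniform on compacts and uniform in $t_0$, and then ordering the choices of neighborhoods and $\omega^\ast$ correctly for the semi-global claim, is where essentially all the work lies; the averaging identity and the Lyapunov comparison are routine afterward.
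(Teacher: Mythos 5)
This theorem is not proved in the paper at all: it is quoted verbatim from D\"urr, Stankovi\'c, Ebenbauer and Johansson (reference \cite{durr2013lie}) and used as an off-the-shelf tool, so there is no in-paper proof to compare your argument against. Judged on its own terms, your sketch is a correct and essentially standard outline of how the result is established in that reference: a trajectory-approximation lemma showing that, on compact time windows and compact sets, solutions of \eqref{conaff} converge uniformly (in $t_0$) to solutions of \eqref{conafflie} as $\omega\to\infty$, followed by a transfer argument that iterates the decay of the Lie bracket system over successive windows to obtain practical uniform asymptotic stability, with the semi-global version obtained by ordering the choices of target neighborhood, compact set, and $\omega^\ast$ exactly as you describe. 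Two remarks. First, \cite{durr2013lie} obtains the approximation estimate by repeated integration by parts on the variation-of-constants form of the solution rather than by your near-identity change of coordinates $x = z + \omega^{-1/2}\sum_i f_i(z,\cdot)U_i(\tau)$; the two routes are equivalent in substance, but their arrangement is tailored so that only the quantities listed in the theorem's hypotheses (in particular $\partial_x[f_j,f_k]$ and $\partial_t\partial_x f_j$, rather than a raw bound on $\partial_x^2 f_i$) appear in the remainder --- your Taylor expansion of $f_i(x,\cdot)$ about $z$ formally invokes a second $x$-derivative of $f_i$ that is not explicitly among the hypotheses, so you would need to reorganize the remainder as they do. Second, the transfer step in \cite{durr2013lie} works directly with the $\mathcal{KL}$ characterization of uniform asymptotic stability rather than a converse Lyapunov function; your Lyapunov variant also works but is not what the cited proof does. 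Neither point is a gap in the mathematics, but since the paper's correctness here rests entirely on the citation, the appropriate "proof" in context is simply the reference.
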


To provide a clear understanding of the concept of practical stability, we present the following definition from \cite{durr2013lie}:
\begin{definition} 
A compact set $\mathcal{S} \subset \mathbb{R}^n $ is said to be locally practically uniformly asymptotically stable for \eqref{conaff} if the following three conditions are satisfied:
\begin{itemize}
    \item Practical Uniform Stability: For any $\epsilon>0$ there exist
$\delta, \omega_0>0$ such that for all $t_0 \in \mathbb{R}^+$ and $\omega> \omega_0$,
\begin{align}
    x(t_0) \in U_{\delta}^{\mathcal{S}} \Rightarrow x(t) \in U_{\epsilon}^{\mathcal{S}}, \quad t \in [t_0, \infty).
\end{align}
    \item $\delta$-Practical Uniform Attractivity:  Let $\delta>0$. For any $\epsilon>0$ there exist $t_1 \geq 0$ and $\omega_0 > 0$ such that for all $t_0 \in \mathbb{R}^+$ and $\omega>\omega_0$,
\begin{align}
    x(t_0) \in U_{\delta}^{\mathcal{S}} \Rightarrow x(t) \in U_{\epsilon}^{\mathcal{S}}, \quad t \in [t_0+t_1, \infty).
\end{align}
    \item Practical Uniform Boundedness:  For any $\delta>0$ there exist $\epsilon>0$ and $\omega_0>0$ such that for all $t_0 \in \mathbb{R}^+$ and $\omega>\omega_0$,
\begin{align}
    x(t_0) \in U_{\delta}^{\mathcal{S}} \Rightarrow x(t) \in U_{\epsilon}^{\mathcal{S}}, \quad t \in [t_0, \infty).
\end{align}
Furthermore, if $\delta$-practical uniform attractivity holds for every $\delta>0$, then the compact set $\mathcal{S}$ is said to be semi-globally practically uniformly asymptotically stable for \eqref{conaff}.
\end{itemize}
\end{definition}

\section{Problem Statement} \label{sec_ps}
We consider the following optimization problem
\begin{align}
    \min_{\theta \in \mathcal{U}} J(\theta, \zeta(t)), \label{Jopt}
\end{align}
where $\theta \in \mathcal{U} \subset \mathbb{R}^n$ is the input, $\zeta(t) \in \mathbb{R}^{l}$ is an unknown time-varying function, $J \in \mathcal{C}^2: \mathcal{U} \times \mathbb{R}^l \to \mathbb{R}$
is an unknown cost function. We make the following assumptions:
\begin{assumption} \label{Ass0}
There exists a unique continuously differentiable function $\pi: \mathbb{R}^l \to \mathbb{R}^n$  such that, the solution of \eqref{Jopt} is given by
   \begin{align}
       \theta^*(t)={}\pi(\zeta(t))=\arg \min_{\theta(t) \in {}\mathcal{U}} J(\theta(t), \zeta(t)),
   \end{align}
which satisfy the following conditions
\begin{align}
        J(\theta^*(t), \zeta(t)) <{}& J(\theta(t), \zeta(t)), \quad \forall \theta(t) \neq \theta^*(t),       \\
        \frac{\partial J(\theta^*(t), \zeta(t)) }{\partial \theta} ={}&0,
\end{align}
for $t \in [t_0, \infty), t_0 \geq 0$.
\end{assumption}
\begin{assumption} \label{Ass1}
There exist $\kappa_1, \kappa_2>0$ such that
\begin{align}
    (\theta-\theta^*(t))^T \frac{\partial J(\theta, \zeta(t))}{\partial \theta}\geq {}&\kappa_1|\theta-\theta^*(t)|^{2}, \label{strongconvex} \\
    \left| \frac{\partial^2 J(\theta, \zeta(t))}{\partial \theta^2} \right| \leq {}& \kappa_2 I. \label{strongconvex2}
\end{align}
\end{assumption}

\begin{assumption} \label{asympextbound}
The time-varying functions ${\zeta}(t)$, $\dot{\zeta}(t)$, ${\theta}^*(t)$, $\dot{\theta}^*(t)$,  $\ddot{\theta}^*(t)$ satisfy the following bound for $t \in [t_0, \infty)$ 
\begin{align}
|{\zeta}(t)|+|\dot{\zeta}(t)|+|{\theta}^*(t)|+|\dot{\theta}^*(t)|+|\ddot{\theta}^*(t)| \leq {}  M_{\theta}, \label{ass3bound} 
\end{align}
 where $M_{\theta}$ is an unknown positive constant. Furthermore, there exists an unknown positive constant $M_J>0$ such that
\begin{align}
&\left| J(\theta, \zeta(t))\right|+\left|\frac{\partial J(\theta, \zeta(t))}{\partial \zeta}\right|+\left|\frac{\partial^2 J(\theta, \zeta(t))}{\partial \theta \partial \zeta }\right| \leq {}  M_J,  \label{ass3bound2} 
\end{align}
for $t \in [t_0,\infty)$ and for $\theta(t) \in U_{\varepsilon}^{\mathcal{H}}$, $\varepsilon>0$, $\theta^*(t) \in \mathcal{H}$, where $\mathcal{H} \subset \mathbb{R}^n$ is a compact set.  
\end{assumption}

Assumption \ref{Ass0} guarantees the existence of a unique minimum of the function $J(\theta, \zeta(t))$ at $\theta(t)=\theta^*(t)$. Assumption 2 requires the cost function to be strongly convex in $\theta$. Assumption \ref{asympextbound} ensures the boundedness of the time-varying functions of $\zeta^*(t)$, $\theta^*(t)$ and their time derivatives, as well as the cost function $J(\theta,\zeta(t))$ and its partial derivatives.
We measure the unknown function $J(\theta, \zeta(t))$ in real time as follows
\begin{align}
    y(t)={}&J(\theta(t), \zeta(t)), \qquad t \in [t_0,\infty), \label{youtput}
\end{align}
in which $y \in \mathbb{R}$ is the output.

Our objective is to develop ES algorithms that utilize output feedback $y(t)$ to achieve perfect tracking of $\theta^*(t)$ by $\theta$ both asymptotically and exponentially, which consequently  minimizes the value of $y(t)$. This objective  is achieved by properly choosing monotonically increasing frequencies and controller gains.  Importantly, the  design is performed without the need for prior knowledge of the optimal input $\theta^*(t)$ or the function $J(\theta, \zeta(t))$.
To provide a visual representation, the ES designs to be introduced are depicted schematically in Fig. \ref{blockdiag}. The time-varying design parameters $\nu(t), \varphi(t), \eta(t)$ are given in Table \ref{Table1}.
\begin{figure}[t]
    \centering
     \includegraphics[width=0.85\linewidth]{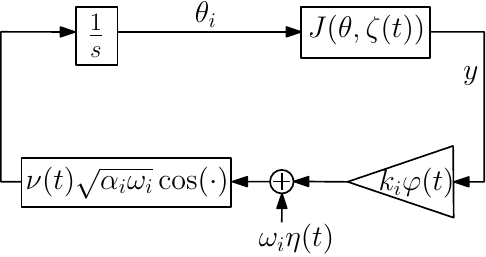}
    \caption{Accelerated ES scheme for the $i$th element $\theta_i$ of $\theta$ with perfect tracking. The scheme relies on a growing frequency signal $\eta(t)$ as well as increasing gains $\nu(t)$ and $\varphi(t)$. For the corresponding functions, refer to Table \ref{Table1}.} 
     \label{blockdiag}
\end{figure}

\begin{table}[t]
\scalebox{1}{
\begin{tabular}{|l |l|}
\hline
\multirow{6}{*}{\textbf{Asymptotic ES }}  & \multirow{2}{*}{$\nu(t)=(1+\beta(t-t_0))^{\frac{m}{r}}$}   \\ [1em] 
 &  \multirow{1}{*}{$\eta(t)=t_0+(1+\beta(t-t_0))^{\frac{r+2}{r}}(t)-1$}  \\ [1em]
 &  \multirow{1}{*}{$\varphi(t)=(1+\beta(t-t_0))^{\frac{2}{r}}(t)$}  \\ [-0.2em]
 &   \\ \hline
 \multirow{5}{*}{\textbf{Exponential ES }}  & \multirow{2}{*}{$\nu(t)=e^{\lambda p(t-t_0)}$}   \\ [1em]
 &  \multirow{1}{*}{$\eta(t)=t_0+e^{2\lambda (t-t_0)}-1$} \\ [0.7em]
 &   $\varphi(t)=e^{2\lambda (t-t_0)}$  \\ [-0.6em]
 &   \\ \hline
\end{tabular}} 
\caption{Time-varying functions used in asymptotic and exponential ES schemes with any $\beta, r, \lambda>0$ and some $m, p>0$.} 
\label{Table1}
\end{table}

In practice, to ensure robustness against noise and numerical issues, the gains $\nu(t)$ and $\varphi(t)$, along with the instantaneous frequency $\omega_i \frac{d\eta(t)}{dt}$, can be constrained to moderately large values that are sufficient for close tracking of the extremum.

\begin{remark}
It is worth noting that there exists flexibility in designing alternative parameters, $\nu(t), \eta(t),$ and $\varphi(t)$, which are capable of achieving the desired convergence results. The choices of the parameters in Table \ref{Table1} prioritize conservative growth in adaptation rate and frequency.
\end{remark}

\section{Asymptotic  ES Design} \label{AsymES}
In this section, we develop an ES referred to as asymptotic ES, which is designed to achieve perfect asymptotic tracking of the time-varying optimum.
The key elements of our design are the asymptotically growing frequencies and controller gains. 
By carefully selecting a rapid growth rate for the time-varying design parameters, $\varphi(t),$ $\eta(t),$ and $\nu(t),$ we guarantee precise tracking of the time-varying optimum.
The asymptotic convergence result is stated in the following theorem.
\begin{theorem} \label{theoremasymp}
Consider the following asymptotic ES design
\begin{align}
     \dot{\theta}={}&\xi^{m}(t) \sum_{i=1}^{n}  \sqrt{\alpha_i \omega_i}  e_i  \nonumber \\
     &\times \cos\Big(\omega_i (t_0+\xi^{r+2}(t)-1)+k_i \xi^2(t)y\Big), \label{ESwithasympt} 
\end{align}
with
\begin{align}
    \xi(t)={}\left(1+\beta(t-t_0)\right)^{\frac{1}{r}}, \qquad t \in [t_0, \infty), \label{xidotdyn}
\end{align}
where $\omega_i = \omega \hat{\omega}_i$ such that $\hat{\omega}_i \neq \hat{\omega}_j$ $\forall i \neq j$, $ t_0 \geq 0$, and
 $\beta, r, k_i, \alpha_i>0$, $ i=1,\dots,n$ 
under  Assumptions \ref{Ass0}--\ref{asympextbound}. There exists $\omega^*>0$
such that for all $\omega > \omega^*$, the following results hold:
\begin{itemize}
\item If $\theta^*$ is constant, i.e., $\dot{\theta}^*(t) \equiv 0$, $\theta(t)$ asymptotically converges to $\theta^*$ for $-\frac{r}{2} \leq m\leq 1$ and $k_i \alpha_i>2(r+2)^2\beta^3/(r^3\kappa_1)$, $ i=1,\dots,n$, 
\item If $\theta^*(t)$ is time-varying, $\theta(t)$ asymptotically converges to $\theta^*(t)$ for $\frac{1}{2}<m\leq 1$ and $k_i \alpha_i>2(r+2)^2(2mr-r+1)\beta^3/(r^3\kappa_1)$, $ i=1,\dots,n$.  
\end{itemize}

\end{theorem}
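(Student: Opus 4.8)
The plan is to reduce the time-varying ES system \eqref{ESwithasympt} to the setting of Theorem \ref{LieBracketAvThe} via two successive transformations, and then run a Lyapunov argument on the Lie bracket system. First I would introduce the \emph{state transformation} $z = \nu(t)\bigl(\theta - \theta^*(t)\bigr)$, with $\nu(t) = \xi^m(t) = (1+\beta(t-t_0))^{m/r}$, so that the growing gain in front of the dither is absorbed into the coordinate. Computing $\dot z = \dot\nu(\theta-\theta^*) + \nu\dot\theta - \nu\dot\theta^*$ and substituting \eqref{ESwithasympt}, the term $\nu(t)\xi^m(t)\sqrt{\alpha_i\omega_i}\cos(\cdot)$ appears; one then uses the standard ES expansion of $y = J(\theta,\zeta(t))$ around $\theta^*(t)$, writing $\frac{\partial J}{\partial\theta}(\theta,\zeta(t)) = \frac{\partial^2 J}{\partial\theta^2}(\theta^*,\zeta(t))(\theta-\theta^*) + O(|\theta-\theta^*|^2)$, to identify the gradient-carrying vector fields. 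Second I would apply the \emph{time-dilation transformation} $\tau = \eta(t) = t_0 + \xi^{r+2}(t) - 1$, so that $\frac{d\tau}{dt} = \frac{r+2}{r}\beta\,\xi^{2}(t) = \frac{r+2}{r}\beta\,\varphi(t)$; this is precisely the factor chosen so that $\omega_i(t_0 + \xi^{r+2}(t)-1) + k_i\xi^2(t)y = \omega_i\tau + \tilde k_i(\tau)\,\tilde y$ becomes a clean fixed-frequency dither $\cos(\omega_i\tau + \cdots)$ in the new time, and so that $\frac{d\theta^*}{d\tau} = \dot\theta^*(t)/\frac{d\tau}{dt} \to 0$ as $\tau\to\infty$ because $\varphi(t)$ grows. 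The bookkeeping to confirm that, after dividing the whole vector field by $\frac{d\tau}{dt}$, the resulting system is control-affine of the form \eqref{conaff} with $\sqrt{\omega}$ scaling and zero-mean $T$-periodic $u_i$, and that the drift $f_0$ and the $f_i$ satisfy the $\mathcal C^2$ and uniform-boundedness hypotheses of Theorem \ref{LieBracketAvThe} on compact sets, is the first substantive task; the ranges $-r/2 \le m \le 1$ (constant case) and $1/2 < m \le 1$ (time-varying case) are exactly what is needed to keep the transformed $\dot\nu/\nu$ and residual $\theta^*$-derivative terms bounded in $\tau$.

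Next I would compute the \emph{Lie bracket system} \eqref{conafflie} for the transformed, dilated dynamics. With dither pairs $u_i(\sigma) = \sqrt{2}\cos(\hat\omega_i\sigma)$-type signals and distinct frequencies $\hat\omega_i \ne \hat\omega_j$, the cross Lie brackets $[f_i,f_j]$ for $i\ne j$ average to zero, and the surviving averaged dynamics take the gradient-flow form
\begin{align}
\frac{d\bar z}{d\tau} = -\,\frac{r}{(r+2)\beta}\sum_{i=1}^n \frac{k_i\alpha_i}{2}\, e_i e_i^T\, \frac{\partial^2 J}{\partial\theta^2}\bigl(\theta^*(\tau),\zeta(\tau)\bigr)\bar z \;+\; (\text{vanishing-in-}\tau\text{ terms}),\nonumber
\end{align}
plus the contribution of $\dot\nu/\nu$ which, under the stated $m$-range, is a decaying (constant case) or bounded stabilizing (time-varying case) linear term. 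I would then use the Lyapunov function $V = \tfrac12 |\bar z|^2$: by Assumption \ref{Ass1}, $\bar z^T \frac{\partial^2 J}{\partial\theta^2}\bar z \ge$ a term controlled via \eqref{strongconvex}–\eqref{strongconvex2}, and the coefficient threshold $k_i\alpha_i > 2(r+2)^2\beta^3/(r^3\kappa_1)$ (resp.\ with the extra factor $2mr-r+1$) is exactly the condition guaranteeing that the negative-definite gradient term dominates the indefinite $\dot\nu/\nu$ term and the $\theta^*$-derivative perturbation, yielding $\dot V \le -cV + (\text{terms}\to 0)$, hence global uniform asymptotic stability of $\{\bar z = 0\}$ for the Lie bracket system. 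Applying Theorem \ref{LieBracketAvThe} then gives semi-global practical uniform asymptotic stability of $z=0$ in the $\tau$-domain for $\omega$ large.

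Finally I would \emph{undo the transformations}: practical stability of $z = \nu(t)(\theta-\theta^*(t))$ near zero in dilated time, combined with $\nu(t) = \xi^m(t)\to\infty$, forces $|\theta(t)-\theta^*(t)| \le |z|/\nu(t) \to 0$ in original time, upgrading ``practical'' (an $\varepsilon$-ball) to genuine asymptotic convergence; tracking the precise decay rate of $\xi^m$ versus the residual ball size gives the asymptotic-rate claim, and the constant-$\theta^*$ case is the special case where the perturbation term is absent so the wider range $-r/2 \le m \le 1$ is admissible. The main obstacle I anticipate is the second step: verifying that after the time dilation every coefficient — the $\dot\nu/\nu$ feedback term, the transformed argument of the cosine, and especially $\frac{d\theta^*}{d\tau}$ and $\frac{d^2\theta^*}{d\tau^2}$ — remains $\mathcal C^2$ and uniformly bounded in $\tau$ on compacts (this is where Assumption \ref{asympextbound} and the exact exponents in $\xi$, $\eta$, $\varphi$ are all consumed), so that Theorem \ref{LieBracketAvThe} legitimately applies; getting the threshold constants to come out exactly as stated is a careful but routine Lyapunov estimate once that structure is in place.
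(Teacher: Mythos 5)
Your overall architecture (normalizing state transformation, time dilation $\tau=t_0+\xi^{r+2}(t)-1$, Lie bracket averaging, Lyapunov analysis of the averaged system, then inversion of the transformations) is the same as the paper's, but two concrete choices in your plan would make the proof fail. First, you normalize with $z=\xi^m(t)\tilde\theta$, apparently identifying the Table~\ref{Table1} gain $\nu(t)$ with the state-transformation factor; the paper instead uses $\tilde\theta_f=\xi(t)\tilde\theta$ with $\xi(t)=(1+\beta(t-t_0))^{1/r}$, i.e.\ exponent $1/r$ independent of $m$ (see \eqref{transthf}). This matters decisively in your final step: Lie bracket averaging only yields \emph{practical} uniform stability, confining the transformed state to an $\epsilon$-ball, and exact convergence $\theta\to\theta^*$ is recovered solely because $|\theta-\theta^*|=|\tilde\theta_f|/\xi(t)$ with $\xi(t)\to\infty$. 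With your normalizer $\xi^m(t)$ and $m\le 0$ — which is permitted in the constant-optimum case, $-\tfrac{r}{2}\le m\le 1$ — the factor does not grow, so the upgrade from practical to asymptotic convergence collapses and you prove strictly less than the theorem claims. Second, your displayed averaged system carries a constant coefficient $\tfrac{r}{(r+2)\beta}$ on the gradient term, whereas the correct averaged dynamics (cf.\ \eqref{asymthetildedotexptauLie}) carry the \emph{growing} factor $(\tau-t_0+1)^{2m/(r+2)}$, i.e.\ $\xi^{2m}(t)$ in the original time. That growing coefficient is exactly what lets the gradient term dominate the perturbation $-\xi(t)\dot\theta^*(t)$ in the time-varying case, and it is the source of the condition $m>\tfrac12$ (through the exponent comparison in Lemma~\ref{lemVdot}); you attribute $m>\tfrac12$ to boundedness of the coefficients in $\tau$, but boundedness only requires $m\le 1$, so the key mechanism behind the threshold is missing from your argument, and the gain thresholds would not come out as stated.

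Two smaller points. The Taylor expansion $\frac{\partial J}{\partial\theta}=\frac{\partial^2 J}{\partial\theta^2}(\theta^*)(\theta-\theta^*)+O(|\theta-\theta^*|^2)$ is neither needed nor used: the paper keeps $J$ exact inside the cosine, the Lie bracket $[b_{c,i},b_{s,i}]$ produces $\partial J_{f,\tau}/\partial\tilde\theta_{f,i}$ exactly, and strong convexity \eqref{strongconvex} is applied directly in the Lyapunov estimate via \eqref{convprop}; introducing a local linearization would force you to control remainder terms and would weaken the result. Also note that the constant-$\theta^*$ case in the paper is not merely ``the perturbation-free special case'': there the averaged system is shown to be genuinely asymptotically (indeed exponentially for $m>-\tfrac r2$) stable, and the wider range $m\ge -\tfrac r2$ comes from requiring $\int_{t_0}^t\xi^{2m}(\sigma)\,d\sigma\to\infty$, another place where the exponents must be tracked against $\xi$ rather than $\xi^m$.
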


\begin{proof} Let us proceed through the proof step by step.

\textbf{Step 1: State transformation.} Define the error state 
\begin{align}
    \tilde{\theta}(t)={}\theta(t)-\theta^*(t),
\end{align}
which obeys the following dynamics
\begin{align}
    \dot{\tilde{\theta}}={}&-\dot{\theta}^*(t)+\xi^{m}(t)  \sum_{i=1}^n \sqrt{\alpha_i \omega_i} e_i\cos\Big(\omega_i (t_0+\xi^{r+2}(t)-1)  \nonumber \\
    &+k_i \xi^2(t) J(\tilde{\theta}+\theta^*(t),\zeta(t))\Big), \label{asymthetildedot}
\end{align}
with the help of  \eqref{youtput}. Considering the following transformation
{
\setlength{\abovedisplayskip}{3pt}
\setlength{\belowdisplayskip}{3pt}
\begin{align}
     \tilde{\theta}_f={}\xi(t) \tilde{\theta}, \label{transthf}
\end{align}}which transforms \eqref{asymthetildedot} to
\begin{align}
    \dot{\tilde{\theta}}_f={}&-\xi(t)\dot{\theta}^*(t)+\frac{\beta}{r\xi^{r}(t)} \tilde{\theta}_f+\xi^{m+1} (t) \sum_{i=1}^n \sqrt{\alpha_i \omega_i} e_i \nonumber \\
    &\times \cos\Big(\omega (t_0+\xi^{r+2}(t)-1)+k_i \xi^2(t) J_f(\tilde{\theta}_f,t)\Big), \label{asymthetildefdot}
\end{align}
where
\begin{align}
    J_f(\tilde{\theta}_f,t)={}J(\tilde{\theta}_f/\xi(t)+\theta^*(t),\zeta(t)),  \label{Jfdef}
\end{align}
we rewrite \eqref{asymthetildefdot} by expanding the cosine term
as follows
\begin{align}
    \dot{\tilde{\theta}}_f={}&-\xi(t) \dot{\theta}^*(t)+\frac{\beta}{r\xi^{r}(t)} \tilde{\theta}_f  \nonumber \\
    &+\xi^{m+1}(t)  \sum_{i=1}^n \sqrt{\alpha_i \omega_i} e_i \cos\Big(k_i \xi^2(t)J_f(\tilde{\theta}_f,t)\Big) \nonumber \\
    &\times \cos\Big(\omega_i (t_0+\xi^{r+2}(t)-1)\Big) \nonumber \\
    &-\xi^{m+1}(t)  \sum_{i=1}^n \sqrt{\alpha_i \omega_i} e_i \sin\Big(k_i \xi^2(t)J_f(\tilde{\theta}_f,t)\Big) \nonumber \\
    &\times \sin\Big(\omega_i (t_0+\xi^{r+2}(t)-1)\Big). \label{asymthetildedotexp}
\end{align}

\textbf{Step 2: Time transformation.} To carry out our analysis, we introduce the following time dilation and contraction transformations
 {
\setlength{\abovedisplayskip}{5pt}
\setlength{\belowdisplayskip}{5pt}
\begin{align}
    \tau={}&t_0+\xi^{r+2}(t)-1, \nonumber \\
    ={}&t_0+(1+\beta(t-t_0))^{\frac{r+2}{r}}-1, \quad \tau \in [t_0, \infty),  \label{timetransasym1} \\
    t={}&t_0+\beta^{-1}(\tau-t_0+1)^{\frac{r}{r+2}}-\beta^{-1} \label{timetransasym2}
\end{align}}and using the fact that
\begin{align}
    \frac{d \tau}{dt}
    ={}&\frac{r+2}{r}\beta  (\tau-t_0+1)^{\frac{2}{r+2}}, \label{dtaudtt}
\end{align}
we express \eqref{asymthetildedotexp} in the dilated $\tau$-domain as follows
{
\setlength{\abovedisplayskip}{3pt}
\setlength{\belowdisplayskip}{3pt}
\begin{align}
    \frac{d \tilde{\theta}_f}{d \tau}={} &b_0(\tilde{\theta}_f,\tau)+ \sum_{i=1}^n  b_{c,i}(\tilde{\theta}_f,\tau) \sqrt{\omega_i} \cos(\omega_i \tau ) \nonumber \\
    &- \sum_{i=1}^n  b_{s,i}(\tilde{\theta}_f,\tau) \sqrt{\omega_i} \sin(\omega_i \tau ), \label{asymthetildedotexptau}
\end{align}}
where
\begin{align}
    b_0(\tilde{\theta}_f,\tau)={}&- (\tau-t_0+1)^{\frac{1}{r+2}} \frac{d \theta^*_{\tau}(\tau)}{d\tau}\nonumber \\
    &+\frac{1}{(r+2) (\tau-t_0+1)} \tilde{\theta}_f, \\
    b_{c,i}(\tilde{\theta}_f,\tau)={}&\frac{r\sqrt{\alpha_i }}{(r+2)\beta } (\tau-t_0+1)^{\frac{m-1}{r+2}} e_i  \nonumber \\
    &\times \cos\Big(k_i (\tau-t_0+1)^{\frac{2}{r+2}} J_{f,{\tau}}(\tilde{\theta}_f,\tau)\Big), \\
    b_{s,i}(\tilde{\theta}_f,\tau)={}&\frac{r\sqrt{\alpha_i }}{(r+2)\beta } (\tau-t_0+1)^{\frac{m-1}{r+2}} e_i  \nonumber \\
    &\times \sin\Big(k_i (\tau-t_0+1)^{\frac{2}{r+2}} J_{f,{\tau}}(\tilde{\theta}_f,\tau)\Big)
\end{align}
with 
\begin{align}
    \theta^*_{\tau}(\tau)={}&\theta^*(t_0+\beta^{-1}(\tau-t_0+1)^{\frac{r}{r+2}}-\beta^{-1}), \\
    J_{f,\tau}(\tilde{\theta}_f,\tau)={}&J_f(\tilde{\theta}_f,t_0+\beta^{-1}(\tau-t_0+1)^{\frac{r}{r+2}}-\beta^{-1}). \label{jftsubau}
\end{align}
Note that after the time transformation from $t$ to $\tau$, the
system \eqref{asymthetildedotexptau} has the form of \eqref{conaff} and consequently, Theorem 1 is directly applicable.

\textbf{Step 3: Feasibility analysis of \eqref{asymthetildedotexptau} for averaging.} 
Before performing Lie bracket averaging, we first 
show that \eqref{asymthetildedotexptau} satisfies the boundedness assumption stated in Theorem 1. Let $\mathcal{M} \subset \mathcal{U} $ be a compact set and
consider the bound \eqref{ass3bound} in Assumption \ref{asympextbound}. Then, we get
\begin{align}
    (\tau-t_0+1)^{\frac{1}{r+2}}\left| \frac{d \theta^*_{\tau}(\tau)}{d\tau}\right| \leq {}& \frac{rM_{\theta}}{(r+2)\beta (\tau-t_0+1)^{\frac{1}{r+2}}}, \label{ineq1}\\
    (\tau-t_0+1)^{\frac{1}{r+2}}\left| \frac{d^2 \theta^*_{\tau}(\tau)}{d\tau^2}\right| \leq {}& \frac{r^2M_{\theta}}{(r+2)^2\beta^2 (\tau-t_0+1)^{\frac{3}{r+2}}}, \label{ineq2}
\end{align}
using \eqref{dtaudtt}.
From  inequalities \eqref{ineq1}, \eqref{ineq2},
we obtain the boundedness of $|b_0(\tilde{\theta}_f,\tau)|$, $\big|\frac{\partial b_0(\tilde{\theta}_f,\tau)}{\partial \tilde{\theta}_f}\big|$, $\big|\frac{\partial b_0(\tilde{\theta}_f,\tau)}{\partial \tau}\big|$ for $(\tau,\tilde{\theta}_f) \in  [t_0, \infty) \times \mathcal{M}$. In light of the strong convexity property in Assumption \ref{Ass1}, we obtain the following bound (see. \cite[pp. 461]{boyd2004convex})
\begin{align}
    \left| \frac{\partial  J(\theta, \zeta(t))}{\partial \theta}\right| \leq 2 \kappa_2 |\theta-\theta^*(t)|= \frac{2\kappa_2}{\xi(t)} |\tilde{\theta}_f|. \label{boundgradient}
\end{align}
Again, considering Assumptions \ref{Ass1}, \ref{asympextbound}, \eqref{boundgradient}, and noting from \eqref{transthf}, \eqref{timetransasym1} that
{
\setlength{\abovedisplayskip}{5pt}
\setlength{\belowdisplayskip}{5pt}
\begin{align}
    \frac{d {\theta}}{d \tilde{\theta}_f}={}\frac{1}{\xi(t)}=\frac{1}{(\tau-t_0+1)^{\frac{1}{r+2}}}, \label{dthdthf}
\end{align}}we obtain the following bound
{
\setlength{\abovedisplayskip}{3pt}
\setlength{\belowdisplayskip}{3pt}
\begin{align}
    &(\tau-t_0+1)^{\frac{2}{r+2}}\left(\left| \frac{\partial J_{f,\tau}(\tilde{\theta}_f,\tau)}{\partial \tilde{\theta}_f}\right| +\left|\frac{\partial^2 J_{f,\tau}(\tilde{\theta}_f,\tau)}{\partial \tilde{\theta}_f^2} \right|\right) \nonumber \\
    & \hspace{2.7cm} = {} \xi(t) \left| \frac{\partial J(\theta,\zeta(t))}{\partial \theta} \right|+\left|\frac{\partial^2 J(\theta,\zeta(t))}{\partial \theta^2} \right|,  \nonumber \\
    & \hspace{2.7cm} \leq {}2\kappa_2 |\tilde{\theta}_f|+\kappa_2, \label{taukappa2}
\end{align}}as well as
\begin{align}
    &\frac{d(\tau-t_0+1)^{\frac{2}{r+2}}}{d\tau}\left( |J_{f,\tau}(\tilde{\theta}_f,\tau)|+ \left| \frac{\partial J_{f,\tau}(\tilde{\theta}_f,\tau)}{\partial \tilde{\theta}_f}\right|\right) \nonumber \\
    & \hspace{1.1cm} ={}\frac{2}{(r+2)\xi^{r}(t)} \left(|J(\theta,\zeta(t))| +\frac{1}{\xi(t)}\left| \frac{\partial J(\theta,\zeta(t))}{\partial \theta} \right|\right),\nonumber \\
    & \hspace{1.1cm} \leq {} \frac{2}{(r+2)\xi^{r}(t)} M_J, \label{boundJxi}
\end{align}
for $(t, \tau,\tilde{\theta}_f) \in [t_0, \infty) \times [t_0, \infty) \times \mathcal{M}$.  
In addition, from \eqref{strongconvex2}--\eqref{ass3bound2}, \eqref{dtaudtt}, \eqref{boundgradient}, \eqref{dthdthf}, the following holds
\begin{align}
    &(\tau-t_0+1)^{\frac{2}{r+2}} \left(\left| \frac{\partial J_{f,\tau}(\tilde{\theta}_f,\tau)}{\partial \tau} \right|+\left| \frac{\partial^2 J_{f,\tau}(\tilde{\theta}_f,\tau)}{\partial \tilde{\theta}_f \partial \tau } \right| \right) \nonumber \\
    & = {} \frac{r}{(r+2)\beta } \Bigg( \left| \frac{\partial J(\theta,\zeta(t))}{\partial \zeta(t)} \dot{\zeta}(t) \right|+\Bigg| \frac{\partial J(\theta,\zeta(t))}{\partial \theta} \nonumber \\
    &\hspace{0.45cm} \times \left(-\frac{\tilde{\theta}_f \dot{\xi}(t)}{{\xi}^2(t)}+\dot{\theta}^*(t)\right) \Bigg| + \frac{1}{\xi(t)} \left| \frac{\partial^2 J(\theta,\zeta(t))}{\partial \theta \partial \zeta(t)} \dot{\zeta}(t) \right|\nonumber \\
    &\hspace{0.45cm}+\frac{1}{\xi(t)} \left| \frac{\partial^2 J(\theta,\zeta(t))}{\partial \theta^2} \left(-\frac{\tilde{\theta}_f \dot{\xi}(t)}{{\xi}^2(t)}+\dot{\theta}^*(t)\right)\right| \Bigg), \nonumber \\
    & \leq M_d, \label{partJthtau}
\end{align}
for some $M_d>0$ and for $(t, \tau,\tilde{\theta}_f) \in [t_0, \infty) \times [t_0, \infty) \times \mathcal{M}$.
Taking into account the bounds given through \eqref{taukappa2}--\eqref{partJthtau} and noting that $m \leq 1$, we can arrive at  the boundedness of $|b_{c,i}(\tilde{\theta}_f,\tau)|, |b_{s,i}(\tilde{\theta}_f,\tau)|$, $\big|\frac{\partial b_{c,i}(\tilde{\theta}_f,\tau) }{\partial \theta_f}\big|$, $\big|\frac{\partial b_{s,i}(\tilde{\theta}_f,\tau) }{\partial \theta_f}\big|$, $\big|\frac{\partial b_{c,i}(\tilde{\theta}_f,\tau) }{\partial \tau}\big|$, $\big|\frac{\partial b_{s,i}(\tilde{\theta}_f,\tau) }{\partial \tau}\big|$ , $\big|\frac{\partial^2 b_{c,i}(\tilde{\theta}_f,\tau) }{\partial \tau \partial \tilde{\theta}_f}\big|$ and $\big|\frac{\partial^2 b_{s,i}(\tilde{\theta}_f,\tau) }{\partial \tau \partial \tilde{\theta}_f}\big|$ for $(\tau,\tilde{\theta}_f) \in  [t_0, \infty) \times \mathcal{M}$. Next, the following Lie bracket is computed
{
\setlength{\abovedisplayskip}{3pt}
\setlength{\belowdisplayskip}{3pt}
\begin{align}
    \begin{bmatrix}
        b_{c,i}(\tilde{\theta}_f,\tau) & b_{s,i}(\tilde{\theta}_f,\tau)
    \end{bmatrix}
    ={}&(\tau-t_0+1)^{\frac{2m}{r+2}} \frac{r^2}{(r+2)^2\beta^2}  \nonumber \\
    &\times k_i \alpha_i \frac{\partial J_{f,{\tau}}(\tilde{\theta}_f,\tau)}{\partial \tilde{\theta}_{f,i}}, \label{liebcbs}
\end{align}}which is bounded due to \eqref{taukappa2} and $ m \leq 1$. The boundedness of $\big|\frac{\partial [b_{c,i}(\tilde{\theta}_f,\tau), b_{s,i}(\tilde{\theta}_f,\tau)]}{\partial \tilde{\theta}_{f}}\big|$ and $\big|\frac{\partial [b_{c,i}(\tilde{\theta}_f,\tau), b_{s,i}(\tilde{\theta},\tau)]}{\partial \tau}\big|$ for $(\tau,\tilde{\theta}_f) \in  [t_0, \infty) \times \mathcal{M}$ is obtained by recalling \eqref{taukappa2}--\eqref{partJthtau}. 

\textbf{Step 4: Lie bracket averaging.} We derive the Lie bracket system for \eqref{asymthetildedotexptau} as follows 
\begin{align}
    \frac{d \bar{\theta}_f}{d \tau}={}&b_0(\bar{\theta}_f, \tau)-\frac{1}{2}\sum_{i=1}^n \begin{bmatrix}
     b_{c,i}(\bar{\theta}_f,\tau) & b_{s,i}(\bar{\theta}_f,\tau) \end{bmatrix}  \nonumber \\
    ={}&- (\tau-t_0+1)^{\frac{1}{r+2}}\frac{d \theta^*_{\tau}(\tau)}{d\tau}+\frac{1}{(r+2)(\tau-t_0+1)}\bar{\theta}_f \nonumber \\
    &-(\tau-t_0+1)^{\frac{2m}{r+2}}\frac{r^2}{(r+2)^2\beta^2}   \nonumber \\
    &\times \sum_{i=1}^n \frac{k_i \alpha_i}{2} e_i  \frac{\partial J_{f,{\tau}}(\bar{\theta}_f,\tau)}{\partial \bar{\theta}_{f,i}}. \label{asymthetildedotexptauLie}
\end{align}

\textbf{Step 5: Stability analysis of the Lie bracket system.} We can write \eqref{asymthetildedotexptauLie} in $t$-domain in view of the transformations \eqref{timetransasym1}, \eqref{timetransasym2} as follows
{
\setlength{\abovedisplayskip}{3pt}
\setlength{\belowdisplayskip}{3pt}
\begin{align}
    \dot{\bar{\theta}}_f={}&-\xi(t) \dot{\theta}^*(t)+ \frac{\beta}{r\xi^{r}(t)} \bar{\theta}_f \nonumber \\
    &-\xi^{2m+2}(t) \frac{r^2}{(r+2)^2\beta^2}  \sum_{i=1}^n \frac{k_i \alpha_i}{2}e_i \frac{\partial J_f(\bar{\theta}_f,t)}{\partial \bar{\theta}_{f,i}}. \label{asymthetildedotexptauLiet}
\end{align}}Consider the following Lyapunov function
\begin{align}
    V(\bar{\theta}_f)={}\frac{1}{2}|\bar{\theta}_f|^2. \label{lyapasym}
\end{align}
The computation of  the time derivative of \eqref{lyapasym} along \eqref{asymthetildedotexptauLiet} leads to the following estimates
{
\setlength{\abovedisplayskip}{3pt}
\setlength{\belowdisplayskip}{3pt}
\begin{align}
    \dot{V} \leq {}& \xi(t) |\dot{\theta}^*(t)||\bar{\theta}_f|+\frac{\beta}{r\xi^{r}(t)} |\bar{\theta}_f|^2 -\xi^{2m+2}(t) \frac{r^2 (k\alpha)_{\min}}{2(r+2)^2\beta^2} \nonumber \\
    &\times \sum_{i=1}^n \bar{\theta}_{f,i} \frac{\partial J_f(\bar{\theta}_f,t)}{\partial \bar{\theta}_{f,i}}, \nonumber \\
    \leq {}&- \left( \frac{r^2(k\alpha)_{\min} \kappa_1}{2(r+2)^2\beta^2} \xi^{2m}(t)-\frac{\beta}{r\xi^{r}(t)} \right) |\bar{\theta}_f|^2 \nonumber \\
    &+\xi(t) |\dot{\theta}^*(t)||\bar{\theta}_f|, \label{asymVdotf2}
\end{align}}where $(k\alpha)_{\min}=\min\{k_i \alpha_i\}$ for $i = 1, \dots, n$. To obtain \eqref{asymVdotf2}, we have used    the following property 
\begin{align}
    \bar{\theta}_f^T \frac{\partial J_f(\bar{\theta}_f, t)}{\partial \bar{\theta}_f} \geq {}&\frac{\kappa_1}{\xi^2(t)} |\bar{\theta}_f|^{2}, \label{convprop}
\end{align}
that is derived using \eqref{strongconvex} and \eqref{transthf}.
The stability of \eqref{asymVdotf2} is examined based on the following two cases:

\noindent \underline{\textit{Case 1: Constant $\theta^*$.}} In this case, \eqref{asymVdotf2} reads 
\begin{align}
    \dot{V} \leq {} -2\left( \frac{r^2(k\alpha)_{\min} \kappa_1}{2(r+2)^2\beta^2} -\frac{\beta}{r} \right) \xi^{2m}(t)V, 
\end{align}
and for $m\geq -\frac{r}{2}$, the following estimate holds 
\begin{align}
    V(t) \leq e^{-2\left( \frac{r^2(k\alpha)_{\min} \kappa_1}{2(r+2)^2\beta^2} -\frac{\beta}{r} \right)\int_{t_0}^t \xi^{2m}(\sigma)d\sigma} V(t_0),
\end{align}
by the comparison lemma \cite{khalil2002nonlinear}. For $m = -\frac{r}{2}$, we get $\xi^{2m}(t)=(1+\beta(t-t_0))^{-1}$, and obtain that
\begin{align}
    V(t) \leq  V(t_0) (1+\beta(t-t_0))^{-\frac{2}{\beta}\left( \frac{r^2(k\alpha)_{\min} \kappa_1}{2(r+2)^2\beta^2} -\frac{\beta}{r} \right)},
\end{align}
which decays to zero asymptotically, provided that $(k\alpha)_{\min}>2(r+2)^2\beta^3/(r^3\kappa_1)$. For $m > -\frac{r}{2}$, we write
\begin{align}
    V(t) \leq e^{-2\left( \frac{r^2(k\alpha)_{\min} \kappa_1}{2(r+2)^2\beta^2} -\frac{\beta}{r} \right) \frac{(1+\beta(t-t_0))^{2m/r+1}-1}{\beta(2m/r+1)}} V(t_0),
\end{align}
which decays to zero exponentially, provided that $(k\alpha)_{\min}>2(r+2)^2\beta^3/(r^3\kappa_1)$. This, in turn, implies that  the averaged system \eqref{asymthetildedotexptauLiet} is asymptotically stable. 

\noindent \underline{\textit{Case 2: Time-varying ${\theta}^*(t)$.}} In view of the bound \eqref{ass3bound}, we can rewrite \eqref{asymVdotf2} as follows 
\begin{align}
    \dot{V} \leq {}& - \left( \frac{r^2(k\alpha)_{\min} \kappa_1}{2(r+2)^2\beta^2} \xi^{2m}(t)-\frac{\beta}{r\xi^{r}(t)} \right) |\bar{\theta}_f|^2 +M_{\theta} \xi(t)|\bar{\theta}_f|, \nonumber \\
    \leq {}& -\left( \frac{r^2(k\alpha)_{\min} \kappa_1}{2(r+2)^2\beta^2} -\frac{\beta}{r} \right) \xi^{2m}(t) |\bar{\theta}_f|^2 +M_{\theta} \xi(t)|\bar{\theta}_f|,
    \label{asymVdotf3}
\end{align}
for $m \geq -\frac{r}{2}$. Performing Young's inequalities for the following term 
{
\setlength{\abovedisplayskip}{3pt}
\setlength{\belowdisplayskip}{3pt}
\begin{align}
    M_{\theta} \xi(t)|\bar{\theta}_f| \leq \frac{c_{\xi}}{2} \xi(t)|\bar{\theta}_f|^2+\frac{M_{\theta}^2}{2c_{\xi}}\xi(t),
\end{align}}where $c_{\xi}=\frac{r^2(k\alpha)_{\min} \kappa_1}{2(r+2)^2\beta^2} -\frac{\beta}{r}$, we rewrite \eqref{asymVdotf3} as
\begin{align}
    \dot{V} \leq {}& - \left( \frac{r^2(k\alpha)_{\min} \kappa_1}{2(r+2)^2\beta^2} -\frac{\beta}{r} \right) \xi^{2m}(t)V+\frac{M_{\theta}^2}{2c_{\xi}} \xi(t). \label{Vdotlong2} 
\end{align}
By selecting $m>\frac{1}{2}$ and  applying the result from Lemma \ref{lemVdot}, we conclude that the averaged system \eqref{asymthetildedotexptauLiet} is asymptotically stable given that 
$
    (k\alpha)_{\min} >{}2(r+2)^2(2mr-r+1)\beta^3/(r^3\kappa_1).
$

\textbf{Step 6: Lie bracket averaging theorem.} With the asymptotic stability of the averaged system \eqref{asymthetildedotexptauLiet} proved in Step 5, we conclude from Theorem 1 that the origin of the transformed system \eqref{asymthetildedotexp} is practically uniformly asymptotically stable.

\textbf{Step 7: Convergence to extremum.} Considering the
result in Step 6 and recalling from \eqref{xidotdyn}, \eqref{transthf} that
\begin{align}
    \theta(t)=\theta^*(t)+\frac{1}{(1+\beta(t-t_0))^{\frac{1}{r}}}\tilde{\theta}_f(t),
\end{align}
we conclude the asymptotic convergence of $\theta(t)$ to $\theta^*(t)$.
This implies the convergence of the output $y(t)$ to $J(\theta^*(t), \zeta(t))$ and completes the proof of Theorem \ref{theoremasymp}.
\end{proof}

\section{Exponential ES Design} \label{ExpES}
In this section, we take a further step to accelerate the convergence towards the time-varying optimum. 
We introduce an ES, referred to as exponential ES, which relies on gains and frequencies characterized by exponential growth. By carefully selecting the appropriate growth rate for these signals, we achieve perfect exponential tracking of the bounded time-varying optimum. 
The theorem presented below establishes our exponential convergence result.

\begin{theorem} \label{theoremexp}
Consider the following exponential ES design
\begin{align}
     \dot{\theta}={}&\phi^{p}(t) \sum_{i=1}^n \sqrt{\alpha_i \omega_i}  e_i  \cos\left(\omega_i (t_0+\phi^{2}(t)-1)+k_i \phi^2(t) y\right), \label{ESwithexp} 
\end{align}
with
\begin{align}
    \phi(t)={}e^{\lambda(t-t_0)}, \qquad t \in [t_0, \infty), \label{phidef}
\end{align}
where $\omega_i = \omega \hat{\omega}_i$ such that $\hat{\omega}_i \neq \hat{\omega}_j$ $\forall i \neq j$, $ t_0 \geq 0$, $\lambda>0$,  under  Assumptions \ref{Ass0}--\ref{asympextbound}. There exists $\omega^*>0$
such that for all $\omega > \omega^*$, the followings hold:
\begin{itemize}
\item If $\theta^*$ is constant, i.e., $\dot{\theta}^*(t) \equiv 0$, $\theta(t)$ exponentially converges to $\theta^*$ for $0\leq p \leq 1$ and $k_i \alpha_i>4\lambda^2/\kappa_1$, $ i=1,\dots,n$, 
\item If $\theta^*(t)$ is time-varying, $\theta(t)$ exponentially converges to $\theta^*(t)$ for $\frac{1}{2} < p \leq 1$ and $k_i> 8\lambda^2 p/\kappa_1$, $ i=1,\dots,n$.
\end{itemize}
\end{theorem}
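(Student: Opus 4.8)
The plan is to mirror the architecture of the proof of Theorem~\ref{theoremasymp}, since the exponential design in \eqref{ESwithexp}--\eqref{phidef} is structurally the same with the asymptotic growth function $\xi(t)$ replaced by the exponential $\phi(t)=e^{\lambda(t-t_0)}$. First I would introduce the error state $\tilde\theta=\theta-\theta^*(t)$, write its dynamics using \eqref{youtput}, and apply the state transformation $\tilde\theta_f=\phi(t)\tilde\theta$. Since $\dot\phi(t)=\lambda\phi(t)$, this transformation produces a linear drift term $+\lambda\tilde\theta_f$ (replacing the decaying $\tfrac{\beta}{r\xi^r(t)}\tilde\theta_f$ of the asymptotic case), a residual term $-\phi(t)\dot\theta^*(t)$, and the cosine term now carrying the argument $\omega_i(t_0+\phi^2(t)-1)+k_i\phi^2(t)J_f(\tilde\theta_f,t)$ with $J_f(\tilde\theta_f,t)=J(\tilde\theta_f/\phi(t)+\theta^*(t),\zeta(t))$. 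Then I would expand the cosine via the sum formula exactly as in \eqref{asymthetildedotexp}.

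Next, I would perform the time-dilation transformation $\tau=t_0+\phi^2(t)-1=t_0+e^{2\lambda(t-t_0)}-1$, with inverse $t=t_0+\tfrac{1}{2\lambda}\ln(\tau-t_0+1)$ and $\tfrac{d\tau}{dt}=2\lambda(\tau-t_0+1)$. Dividing the transformed $\tilde\theta_f$-dynamics by $\tfrac{d\tau}{dt}$ puts the system in the control-affine form \eqref{conaff} with dither $\sqrt{\omega_i}\cos(\omega_i\tau)$, $\sqrt{\omega_i}\sin(\omega_i\tau)$; crucially, the factor $1/(\tau-t_0+1)$ multiplying the drift and the factor $\phi^{p-1}(t)=(\tau-t_0+1)^{(p-1)/2}$ multiplying the dither coefficients (with $p\le 1$ so this stays bounded) make all coefficients and the relevant derivatives bounded on $[t_0,\infty)\times\mathcal{M}$ for compact $\mathcal{M}$ — the verification being the same Assumptions~\ref{Ass1}--\ref{asympextbound} bookkeeping as Step~3, now using $\phi$ in place of $\xi$ and the explicit bound $|\partial J/\partial\theta|\le 2\kappa_2|\tilde\theta_f|/\phi(t)$. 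I would then compute the Lie bracket $[b_{c,i},b_{s,i}]$, which as in \eqref{liebcbs} collapses to a multiple of $(\tau-t_0+1)^{p}\,\partial J_{f,\tau}/\partial\tilde\theta_{f,i}$, yielding the averaged (Lie bracket) system; translated back to $t$-domain it reads
\begin{align}
    \dot{\bar\theta}_f={}&-\phi(t)\dot\theta^*(t)+\lambda\bar\theta_f-\frac{\phi^{2p+2}(t)}{4\lambda^2}\sum_{i=1}^n k_i\alpha_i e_i\frac{\partial J_f(\bar\theta_f,t)}{\partial\bar\theta_{f,i}}.
\end{align}

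For the stability analysis of the averaged system I would use $V(\bar\theta_f)=\tfrac12|\bar\theta_f|^2$ together with the convexity property $\bar\theta_f^T\partial_{\bar\theta_f}J_f(\bar\theta_f,t)\ge \kappa_1|\bar\theta_f|^2/\phi^2(t)$ derived from \eqref{strongconvex} and $\tilde\theta_f=\phi(t)\tilde\theta$. This gives $\dot V\le -\big(\tfrac{(k\alpha)_{\min}\kappa_1}{4\lambda^2}\phi^{2p}(t)-\lambda\big)|\bar\theta_f|^2+\phi(t)|\dot\theta^*(t)||\bar\theta_f|$. In the constant-$\theta^*$ case the last term vanishes, and since $p\ge 0$ makes $\phi^{2p}(t)\ge 1$, the condition $(k\alpha)_{\min}>4\lambda^2/\kappa_1$ gives $\dot V\le -cV$ for a positive constant $c$, i.e. exponential decay of $\bar\theta_f$; in the time-varying case I would bound $\phi(t)|\dot\theta^*(t)|\le M_\theta\phi(t)$, apply Young's inequality to split $M_\theta\phi(t)|\bar\theta_f|$ into a $\tfrac{c_\phi}{2}\phi^{2p}(t)|\bar\theta_f|^2$ term absorbed by the negative term and a $\tfrac{M_\theta^2}{2c_\phi}\phi(t)$ forcing term, and then invoke Lemma~\ref{lemVdot} (the exponential analogue of the $\xi$-case, where $p>\tfrac12$ ensures $2p>1$ so the forcing $\phi(t)=\phi^{2p}(t)\cdot\phi^{1-2p}(t)$ decays relative to the damping) to obtain asymptotic — in fact exponential — stability of the averaged system under $k_i>8\lambda^2 p/\kappa_1$. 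Finally, Theorem~\ref{LieBracketAvThe} upgrades averaged asymptotic stability to practical uniform asymptotic stability of the transformed system, hence boundedness of $\tilde\theta_f(t)$ uniformly, and since $\theta(t)=\theta^*(t)+\phi^{-1}(t)\tilde\theta_f(t)=\theta^*(t)+e^{-\lambda(t-t_0)}\tilde\theta_f(t)$, the exponential factor drives $\theta(t)\to\theta^*(t)$ exponentially, whence $y(t)\to J(\theta^*(t),\zeta(t))$. The main obstacle is Step~3: one must check carefully that multiplying by $\phi^{p-1}$ and by $1/(\tau-t_0+1)$ really does tame every derivative appearing in the hypotheses of Theorem~\ref{LieBracketAvThe} — in particular the mixed derivative $\partial^2 b_{\cdot,i}/\partial\tau\partial\tilde\theta_f$ and the $\tau$-derivative of the Lie bracket — uniformly in $t$, because the exponential growth of $\phi$ is much more aggressive than the polynomial $\xi$, and the cancellations that keep things bounded rely precisely on the exponents in $\nu,\eta,\varphi$ of Table~\ref{Table1} being matched.
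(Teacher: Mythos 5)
Your proposal follows the paper's route step for step: error state, multiplication by $\phi(t)=e^{\lambda(t-t_0)}$, time dilation $\tau_e=t_0+\phi^2(t)-1$ with $d\tau_e/dt=2\lambda(\tau_e-t_0+1)$, the same boundedness bookkeeping as Step~3 of Theorem~\ref{theoremasymp}, Lie bracket averaging, a quadratic Lyapunov function with the scaled convexity bound $\bar\theta_f^T\partial_{\bar\theta_f}J_f\ge\kappa_1|\bar\theta_f|^2/\phi^2(t)$, the two cases with Young's inequality and Lemma~\ref{lemVdot}, Theorem~\ref{LieBracketAvThe}, and finally $\theta=\theta^*+e^{-\lambda(t-t_0)}\tilde\theta_f$. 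The only structural difference is that you carry out the Lyapunov analysis in the $t$-domain (mirroring Step~5 of the asymptotic proof), whereas the paper deliberately stays in the $\tau_e$-domain for the exponential case; this choice is where your two concrete problems arise.

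First, your back-transformed averaged system has the wrong constant: the $\tau_e$-domain averaged gradient term is $-\tfrac{1}{4\lambda^2}(\tau_e-t_0+1)^p\sum_i\tfrac{k_i\alpha_i}{2}e_i\partial_i J_{f,\tau_e}$, and multiplying by $d\tau_e/dt=2\lambda\phi^2(t)$ gives $-\tfrac{\phi^{2p+2}(t)}{4\lambda}\sum_i k_i\alpha_i e_i\partial_i J_f$ (one power of $\lambda$, not $\lambda^2$). With your coefficient $\tfrac{1}{4\lambda^2}$ the Lyapunov damping is $\tfrac{(k\alpha)_{\min}\kappa_1}{4\lambda^2}\phi^{2p}$ against the drift $+\lambda$, and the stated condition $(k\alpha)_{\min}>4\lambda^2/\kappa_1$ then only yields $\dot V\le-(\phi^{2p}-\lambda)|\bar\theta_f|^2$, which is not negative for $\lambda\ge 1$ (and never becomes negative when $p=0$); your claim that the theorem's gain threshold gives $\dot V\le -cV$ therefore does not follow from your own inequality, whereas the correct constant $\tfrac{1}{4\lambda}$ recovers exactly $(k\alpha)_{\min}>4\lambda^2/\kappa_1$. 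Second, in the time-varying case you invoke Lemma~\ref{lemVdot} in the $t$-domain, but that lemma is stated for polynomial weights $\mu(t)=1+\beta(t-t_0)$, not for the exponential weights $\phi^{2p}(t)$, $\phi(t)$ appearing in your estimate (also, your Young splitting should leave a forcing term $\propto\phi^{2-2p}$, not $\phi$, if you absorb with $\phi^{2p}$). The paper avoids this by doing Step~4 entirely in the $\tau_e$-domain, where all coefficients are powers of $(\tau_e-t_0+1)$ and the lemma applies verbatim with $\beta=1$, $m_1=p-1\ge-1$, $m_2=-\tfrac12$; to salvage your $t$-domain version you would need either to prove an exponential-weight analogue of the lemma or to transform back to $\tau_e$ before applying it. Both issues are repairable, but as written they break the derivation of the stated gain conditions.
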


\begin{proof} Let us proceed through the proof step by step.

\textbf{Step 1: State transformation.} Taking the derivative of the error state $\tilde{\theta}(t)={}\theta(t)-\theta^*(t)$ in view of \eqref{ESwithexp} and recalling \eqref{youtput}, we get the following error dynamics
\begin{align}
    \dot{\tilde{\theta}}={}&-\dot{\theta}^*(t)+\phi^{p}(t) \sum_{i=1}^n  \sqrt{\alpha_i \omega_i} e_i \cos\Big(\omega_i (t_0+\phi^{2}(t)-1) \nonumber \\
    &+k_i \phi^2(t) J(\tilde{\theta}+\theta^*(t),\zeta(t))\Big). \label{expthetildedot}
\end{align}
Consider the following transformation
\begin{align}
     \tilde{\theta}_f={}\phi(t) \tilde{\theta}, \label{transthfphi}
\end{align}
which transforms \eqref{expthetildedot} to
{
\setlength{\abovedisplayskip}{3pt}
\setlength{\belowdisplayskip}{3pt}
\begin{align}
    \dot{\tilde{\theta}}_f={}&-\phi(t)\dot{\theta}^*(t)+\lambda  \tilde{\theta}_f+\phi^{p+1} (t) \sum_{i=1}^n  \sqrt{\alpha_i \omega_i}e_i \nonumber \\
    &\times \cos\Big(\omega (t_0+\phi^{2}(t)-1)+k_i \phi^2(t) J_f(\tilde{\theta}_f,t)\Big), \label{expthetildefdot}
\end{align}}with
\begin{align}
    J_f(\tilde{\theta}_f,t)={}J(\tilde{\theta}_f/\phi(t)+\theta^*(t),\zeta(t)).  \label{Jfdef2}
\end{align}

\textbf{Step 2: Time transformation.} Let us perform the following time dilation and contraction transformations
\begin{align}
   \tau_e={}&t_0+\phi^2(t)-1,  \nonumber \\
    ={}&t_0+e^{2\lambda (t-t_0)}-1, \quad \tau_e \in [t_0, \infty), \label{tauedefin} \\
    t={}&t_0+\frac{1}{2\lambda} \ln(\tau_e-t_0+1).
\end{align}
Considering the following fact
\begin{align}
    \frac{d \tau_e}{dt}={}&2\lambda e^{2\lambda  (t-t_0)}={}2\lambda  (\tau_e-t_0+1), \label{dtauedt}
\end{align}
we express \eqref{expthetildefdot} in the dilated $\tau_e$-domain as follows
\begin{align}
    \frac{d \tilde{\theta}_f}{d \tau_e}={}&-(\tau_e-t_0+1)^{\frac{1}{2}} \frac{d \theta^*_{\tau_e}(\tau_e)}{d\tau_e}+\frac{1}{2(\tau_e-t_0+1)}\tilde{\theta}_f  \nonumber \\
    &+ \frac{1}{2\lambda} (\tau_e-t_0+1)^{\frac{p-1}{2}} \sum_{i=1}^n \sqrt{\alpha_i \omega_i} e_i  \nonumber \\
    &\times \cos\Big(\omega \tau_e+k_i (\tau_e-t_0+1) J_{f,\tau_e}(\tilde{\theta}_f,\tau_e)\Big), \label{thetaexpoverte}
\end{align}
with $\theta^*_{\tau_e}(\tau_e)=\theta^*(t_0+1/(2\lambda)\ln(\tau_e-t_0+1))$ and
\begin{align}
    J_{f,\tau_e}(\tilde{\theta}_f,\tau_e)={}&J_f\left(\tilde{\theta}_f,t_0+\frac{1}{2\lambda } \ln(\tau_e-t_0+1)\right). 
\end{align}
We rewrite \eqref{thetaexpoverte} by expanding the cosine term
as follows
\begin{align}
    \frac{d \tilde{\theta}_f}{d \tau_e}={}&-(\tau_e-t_0+1)^{\frac{1}{2}} \frac{d \theta^*_{\tau_e}(\tau_e)}{d\tau_e}+\frac{1}{2(\tau_e-t_0+1)}\tilde{\theta}_f \nonumber \\
    &+\sum_{i=1}^n \frac{\sqrt{\alpha_i}}{2 \lambda}  e_i
    \cos\Big(k_i (\tau_e-t_0+1) J_{f,\tau_e}(\tilde{\theta}_f,\tau_e)\Big)
     \nonumber \\
     &\times (\tau_e-t_0+1)^{\frac{p-1}{2}} \sqrt{\omega_i} \cos(\omega_i \tau_e) \nonumber \\
    &-\sum_{i=1}^n \frac{\sqrt{\alpha_i}}{2\lambda}  e_i 
     \sin\Big(k_i (\tau_e-t_0+1) J_{f,\tau_e}(\tilde{\theta}_f,\tau_e)\Big)  \nonumber \\
     &\times (\tau_e-t_0+1)^{\frac{p-1}{2}} \sqrt{\omega_i} \sin(\omega_i \tau_e). \label{thetaexpoverteexpand}
\end{align}

\textbf{Step 3: Lie bracket averaging.} The feasibility of the error system \eqref{thetaexpoverteexpand} for Lie bracket averaging can be verified analogously to Step 3 in the proof of Theorem \ref{theoremasymp}. For $p \leq 1$, we derive the following average system
\begin{align}
    \frac{d \bar{\theta}_f}{d \tau_e}={}&-(\tau_e-t_0+1)^{\frac{1}{2}} \frac{d \theta^*_{\tau_e}(\tau_e)}{d\tau_e}+\frac{1}{2(\tau_e-t_0+1)}\bar{\theta}_f \nonumber \\
    &-\frac{1}{4\lambda^2}  (\tau_e-t_0+1)^{p} \sum_{i=1}^n \frac{k_i \alpha_i}{2} e_i  \frac{\partial J_{f,{\tau_e}}(\bar{\theta}_f,\tau_e)}{\partial \bar{\theta}_{f,i}}. \label{thetaexpoverteexpandtaue}
\end{align}

\textbf{Step 4: Stability analysis.} To study the stability of \eqref{thetaexpoverteexpandtaue}, we consider the following Lyapunov function
\begin{align}
    V(\bar{\theta}_f)={}\frac{1}{2}|\bar{\theta}_f|^2. \label{lyapexp}
\end{align}
Derivative of \eqref{lyapexp} with respect to $\tau_e$ using \eqref{strongconvex}, \eqref{transthfphi}, \eqref{thetaexpoverteexpandtaue} yields
\begin{align}
    \frac{dV}{d\tau_e} \leq {}&(\tau_e-t_0+1)^{\frac{1}{2}} |\bar{\theta}_f| \left|\frac{d \theta^*_{\tau_e}(\tau_e)}{d\tau_e} \right|+\frac{1}{2(\tau_e-t_0+1)}|\bar{\theta}_f|^2  \nonumber \\
    &-\frac{1}{4 \lambda^2}  (\tau_e-t_0+1)^{p} \sum_{i=1}^n \frac{k_i \alpha_i}{2}  \bar{\theta}_{f,i} \frac{\partial J_{f,{\tau_e}}(\bar{\theta}_f,\tau_e)}{\partial \bar{\theta}_{f,i}}, \nonumber \\
    \leq {}& (\tau_e-t_0+1)^{\frac{1}{2}} |\bar{\theta}_f| \left|\frac{d \theta^*_{\tau_e}(\tau_e)}{d\tau_e} \right|-\bigg(\frac{(k\alpha)_{\min} \kappa_1 }{8\lambda^2} \nonumber \\
    & \times (\tau_e-t_0+1)^{p-1}-\frac{1}{2(\tau_e-t_0+1)}\bigg)|\bar{\theta}_f|^2, \label{dVdtaue}
\end{align}
where $(k\alpha)_{\min}=\min\{k_i \alpha_i\}$ for $i = 1, \dots, n$, we use the property given in \eqref{convprop}, except that $\xi(t)$ is replaced by $\phi(t)$ and we recall from \eqref{tauedefin} that $\phi(t)=(\tau_e-t_0+1)^{\frac{1}{2}}$. The stability of \eqref{dVdtaue} is examined based on the following two cases: 

\noindent \underline{\textit{Case 1: Constant $\theta^*$.}} Noting that $\dot{\theta}^*(t) \equiv 0$, we rewrite \eqref{dVdtaue} as
\begin{align}
    \frac{dV}{d\tau_e} \leq {}&-2\bigg(\frac{(k\alpha)_{\min} \kappa_1 }{8\lambda^2} -\frac{1}{2}\bigg) (\tau_e-t_0+1)^{p-1} V, \label{dVdtauec1}
\end{align}
for $p \geq 0$. In the case where $p=0$, we compute the solution of \eqref{dVdtauec1} by comparison principle, as 
\begin{align}
    V(\tau_e) \leq V(t_0) (\tau_e-t_0+1)^{-2\left(\frac{(k\alpha)_{\min} \kappa_1 }{8\lambda^2} -\frac{1}{2}\right)},
\end{align}
which asymptotically decays to zero, provided that $(k\alpha)_{\min}>4\lambda^2/\kappa_1$. For $p > 0$, we get
\begin{align}
    V(\tau_e) \leq V(t_0) e^{-2\left(\frac{(k\alpha)_{\min} \kappa_1 }{8\lambda^2} -\frac{1}{2}\right) \frac{(\tau_e-t_0+1)^{p}-1}{p}},
\end{align}
which decays to zero exponentially, provided that $(k\alpha)_{\min}>4\lambda^2/\kappa_1$. This, in turn, implies that  the averaged system \eqref{thetaexpoverteexpandtaue} is asymptotically stable.

\noindent \underline{\textit{Case 2: Time-varying ${\theta}^*(t)$.}} Recalling the bound \eqref{ass3bound}, we rewrite \eqref{dVdtaue} as
\begin{align}
    \frac{dV}{d\tau_e} \leq {}& \frac{M_{\theta}}{2\lambda(\tau_e-t_0+1)^{\frac{1}{2}}} |\bar{\theta}_f| -\bigg(\frac{(k\alpha)_{\min} \kappa_1 }{8\lambda^2} -\frac{1}{2}\bigg) \nonumber \\
    &\times (\tau_e-t_0+1)^{p-1} |\bar{\theta}_f|^2, \label{dvdtauec2}
\end{align}
for $p \geq 0$. Performing Young's inequalities for the following term 
\begin{align}
    \frac{M_{\theta}}{2\lambda(\tau_e-t_0+1)^{\frac{1}{2}}} |\bar{\theta}_f| \leq{}& \frac{1}{(\tau_e-t_0+1)^{\frac{1}{2}}} \nonumber \\
    &\times \left( \frac{c_{\phi}}{2} |\bar{\theta}_f|^2+\frac{M_{\theta}^2}{8\lambda^2 c_{\phi}}\right),
\end{align}
where $c_{\phi}=\frac{(k\alpha)_{\min} \kappa_1 }{8\lambda^2} -\frac{1}{2}$, we rewrite \eqref{dvdtauec2} as
\begin{align}
    \frac{dV}{d\tau_e} \leq {}& \frac{M_{\theta}^2}{8\lambda^2 c_{\phi}(\tau_e-t_0+1)^{\frac{1}{2}}} -\bigg(\frac{(k\alpha)_{\min} \kappa_1 }{8\lambda^2} -\frac{1}{2}\bigg) \nonumber \\
    &\times (\tau_e-t_0+1)^{p-1} V.
\end{align}
Choosing $p>\frac{1}{2}$ and applying the result from Lemma \ref{lemVdot}, we conclude that the averaged system \eqref{thetaexpoverteexpandtaue} is asymptotically stable, provided that 
$
    (k\alpha)_{\min} >{}8\lambda^2 p/\kappa_1.
$

\textbf{Step 5: Lie bracket averaging theorem.} 
Taking into account the asymptotic stability of the averaged system \eqref{thetaexpoverteexpandtaue} proved in Step 4, we can deduce, from Theorem 1, that the origin of the transformed system \eqref{thetaexpoverteexpand} in $\tau_e$-domain (equivalent to \eqref{expthetildefdot} in $t$-domain) is practically uniformly asymptotically stable.

\textbf{Step 6: Convergence to extremum.} Considering the
result in Step 5 and recalling from \eqref{phidef}, \eqref{transthfphi} that
\begin{align}
    \theta(t)={}\theta^*(t)+e^{-\lambda (t-t_0)} \tilde{\theta}_f(t),
\end{align}
we conclude the exponential convergence of $\theta(t)$ to $\theta^*(t)$.
This implies the convergence of the output $y(t)$ to $J(\theta^*(t),\zeta(t))$
and completes the proof of Theorem \ref{theoremexp}.
\end{proof}

\begin{figure}[t]
    \centering
    \begin{subfigure}[b]{\linewidth}
        \centering
        \includegraphics[width=\linewidth]{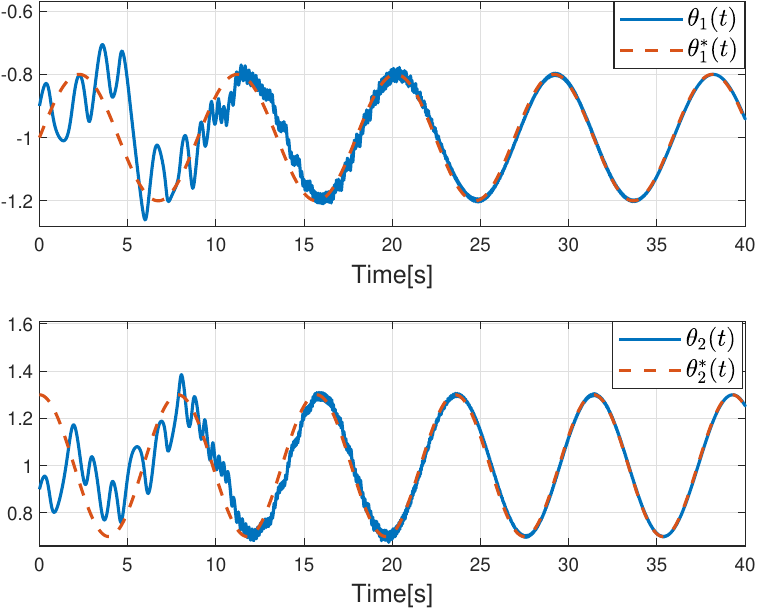}
        \caption{}
        \label{thetafig}
    \end{subfigure}
    \begin{subfigure}[b]{\linewidth}
        \centering
        \includegraphics[width=\linewidth]{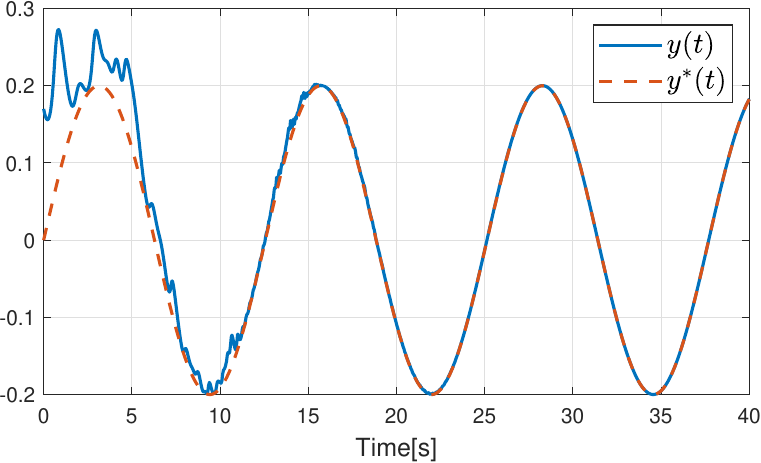}
        \caption{}
        \label{outputfig}
    \end{subfigure}
    \caption{$(a)$ Perfect exponential tracking of the time-varying optimum $\theta_1^*(t), \theta_2^*(t)$ by the inputs $\theta_1, \theta_2$. (b) Exponential convergence of the output $y(t)$ to the time-varying optimum $y^*(t)$.}    
    \label{fig1}
\end{figure}

\section{Numerical Simulation} \label{sec_numer}
In this section, we conduct a numerical simulation to assess the performance of the developed ES algorithms. We consider the following quadratic map
{
\setlength{\abovedisplayskip}{3pt}
\setlength{\belowdisplayskip}{3pt}
\begin{align}
    J(\theta,\theta^*(t))={}&0.2\sin(0.5t)+(\theta_1+1-0.2\sin(0.7t))^2 \nonumber \\
    &+(\theta_2-1-0.3\cos(0.8t))^2,
\end{align}}where $\theta=\begin{bmatrix} \theta_1 & \theta_2 \end{bmatrix}^T \in \mathbb{R}^2$ is the input, $\theta^*(t)=\begin{bmatrix} -1+0.2\sin(0.7t), & 1+0.3\cos(0.8t) \end{bmatrix}^T$ is the optimum input and $y^*(t)=0.2\sin(0.5t)$ is the optimum output. We implement the exponential ES introduced in \eqref{ESwithexp}, employing the following parameters: $p=0.51$, $\alpha_1=0.015$, $\alpha_2=0.02$, $\omega_1=10$, $\omega_2=12$, $k_1=10$, $k_2=11$ and $\lambda=0.1$. The initial conditions are set to $\theta_1(0)=-0.9$, $\theta_2(0)=0.9$. We present the simulation results in Fig. \ref{fig1} and \ref{freqfig}. 
In Figure \ref{thetafig}, we observe that the inputs $\theta_1(t)$ and $\theta_2(t)$ converge toward the optimal inputs $\theta_1^*(t)$ and $\theta_2^*(t)$ exponentially at a rate of $\lambda=0.1$. The exponential growth in frequencies occurs at a rate of $2\lambda=0.2$.
Fig. \ref{outputfig} demonstrates the exponential convergence of the output $y(t)$ toward the time-varying optimal $y^*(t)$. In Fig. \ref{freqfig}, we depict the instantaneous frequencies of \eqref{ESwithexp}, which correspond to $\omega_1 d\phi^2(t)/dt$ and $\omega_2 d\phi^2(t)/dt$, respectively.

\begin{figure}[t]
    \centering
     \includegraphics[width=\linewidth]{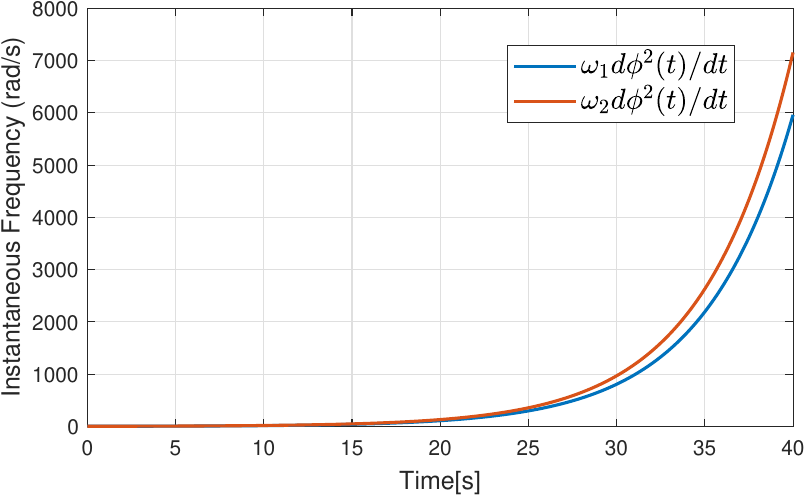}
    \caption{Evolution of the instantaneous frequencies, which correspond to $\omega_1 d\phi^2(t)/dt$ and $\omega_2 d\phi^2(t)/dt$, over time.}    
     \label{freqfig}
\end{figure}

\section{Conclusion} \label{sec_conc}
By focusing on achieving perfect tracking of arbitrary time-varying extremum, this contribution fills  a  gap in the existing ES literature. We introduce two unique ES designs with either asymptotic or exponential convergence, achieved through gains and frequencies characterized by corresponding asymptotic or exponential growth. By carefully tuning the growth rates of these signals, we achieve precise tracking with the desired convergence rate. Our stability analysis relies on state transformation, time-dilation transformation, and Lie bracket averaging techniques. We provide a numerical simulation to illustrate the performance of our exponential ES design in tracking a periodically oscillating extremum.

\section*{APPENDIX}

\subsection{Additional Lemma}
\begin{lemma} \label{lemVdot}
The system
\begin{align}
    \dot{\mathcal{V}}={}  -\varepsilon_a \mu^{m_1}(t) \mathcal{V}+\varepsilon_b \mu^{m_2}(t), 
\end{align}
with 
{
\setlength{\abovedisplayskip}{3pt}
\setlength{\belowdisplayskip}{3pt}
\begin{align}
    \mu(t)={}1+\beta(t-t_0),
\end{align}}for $t \geq t_0$, where $\mathcal{V} \in \mathbb{R}$, $t_0 \geq 0$, $\beta>0$, $\varepsilon_a > \beta (m_1-m_2), \varepsilon_b \in \mathbb{R}$,  $m_1>m_2$, and $m_1 \geq -1$, is asymptotically stable at the origin.
\end{lemma}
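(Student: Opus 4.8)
The plan is to solve the scalar linear ODE explicitly via an integrating factor and then show the resulting expression tends to zero as $t \to \infty$. First I would write the solution of $\dot{\mathcal{V}} = -\varepsilon_a \mu^{m_1}(t)\mathcal{V} + \varepsilon_b \mu^{m_2}(t)$ using the comparison/variation-of-constants formula:
\begin{align}
    \mathcal{V}(t) = e^{-\varepsilon_a \int_{t_0}^t \mu^{m_1}(s)\,ds}\,\mathcal{V}(t_0) + \varepsilon_b \int_{t_0}^t e^{-\varepsilon_a \int_{s}^t \mu^{m_1}(\rho)\,d\rho}\,\mu^{m_2}(s)\,ds. \nonumber
\end{align}
Since $\mu(t) = 1 + \beta(t-t_0) \geq 1$ and $m_1 \geq -1$, the integral $\int_{t_0}^t \mu^{m_1}(s)\,ds$ diverges to $+\infty$ (for $m_1 > -1$ it grows like $\mu^{m_1+1}$; for $m_1 = -1$ it grows like $\ln \mu$), so the homogeneous term decays to zero. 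The remaining work is to bound the convolution integral.

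For the forcing term, the key estimate exploits the gap $m_1 > m_2$. I would bound $\mu^{m_2}(s) = \mu^{m_1}(s)\,\mu^{m_2-m_1}(s) \le \mu^{m_1}(s)\,\mu^{m_2-m_1}(t_0)\cdot(\text{something})$ — more precisely, observe that along the integration $\mu^{m_2 - m_1}(s)$ is itself decreasing (as $m_2 - m_1 < 0$), so one can pull a factor out. A cleaner route: substitute $w = \mu(s)$, $dw = \beta\,ds$, turning both integrals into elementary ones in $w$, and compare $\int e^{-\varepsilon_a W(w)} w^{m_2}\,dw$ against $W'(w) = w^{m_1}$; integration by parts gives a leading term proportional to $\frac{\varepsilon_b}{\varepsilon_a}\mu^{m_2 - m_1}(t)$, which $\to 0$ since $m_2 < m_1$, plus a remainder controlled by the same argument. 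The hypothesis $\varepsilon_a > \beta(m_1 - m_2)$ is exactly what makes this integration-by-parts remainder summable/bounded — it ensures the correction terms generated by differentiating the $w^{m_2}$ factor do not overwhelm the exponential decay.

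The main obstacle I expect is handling the convolution integral carefully enough to show it vanishes (not merely stays bounded), and in particular verifying that the condition $\varepsilon_a > \beta(m_1 - m_2)$ is the precise threshold. One way to make this rigorous without explicit special functions is a comparison-function argument: guess an ansatz $\mathcal{W}(t) = c\,\mu^{m_2 - m_1}(t)$ for a suitable constant $c$, compute $\dot{\mathcal{W}} + \varepsilon_a \mu^{m_1}\mathcal{W} - \varepsilon_b \mu^{m_2} = \big(\beta(m_2 - m_1)c + \varepsilon_a c - \varepsilon_b\big)\mu^{m_2} + (\text{lower order})$, and choose $c = \varepsilon_b/(\varepsilon_a - \beta(m_1 - m_2))$ — well-defined precisely by the hypothesis — so that $\mathcal{W}$ is a supersolution (up to sign bookkeeping for $\varepsilon_b$ of either sign, treating $|\mathcal{V}|$ and using $|\varepsilon_b|$). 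Then the comparison lemma gives $|\mathcal{V}(t) - \mathcal{W}(t)| \le e^{-\varepsilon_a \int_{t_0}^t \mu^{m_1}}\,|\mathcal{V}(t_0) - \mathcal{W}(t_0)| \to 0$, and since $\mathcal{W}(t) = c\,\mu^{m_2-m_1}(t) \to 0$ because $m_1 > m_2$ forces the exponent negative, we conclude $\mathcal{V}(t) \to 0$, i.e. asymptotic stability of the origin. I would present the integrating-factor computation as the backbone and invoke the comparison lemma from \cite{khalil2002nonlinear} for the final step.
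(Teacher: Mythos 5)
Your proposal is essentially correct, but it takes a different route from the paper. The paper does not solve or bound the ODE directly: it applies the weighted state transformation $\mathcal{V}_f=\mu^{m_1-m_2}(t)\,\mathcal{V}$, studies $\Upsilon=\tfrac12\mathcal{V}_f^2$, uses $\mu^{m_1}\ge \mu^{-1}$ (valid since $\mu\ge 1$, $m_1\ge -1$) together with $\varepsilon_a=\beta(m_1-m_2)+\varepsilon_c$ and Young's inequality to get $\dot\Upsilon\le -\varepsilon_c\mu^{m_1}\Upsilon+\tfrac{\varepsilon_b^2}{2\varepsilon_c}\mu^{m_1}$, and then the comparison principle to conclude only \emph{boundedness} of $\mathcal{V}_f$; the convergence $\mathcal{V}\to 0$ comes from dividing by the unboundedly growing weight $\mu^{m_1-m_2}$. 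Your argument instead constructs the explicit decaying supersolution $\mathcal{W}=c\,\mu^{m_2-m_1}$ with $c=\varepsilon_b/(\varepsilon_a-\beta(m_1-m_2))$ and squeezes $\mathcal{V}$; the residual computation indeed closes, since $\dot{\mathcal W}+\varepsilon_a\mu^{m_1}\mathcal W-\varepsilon_b\mu^{m_2}=c\beta(m_1-m_2)\mu^{m_2}\bigl(1-\mu^{-(m_1+1)}\bigr)\ge 0$ exactly because $\mu\ge1$ and $m_1\ge-1$ (your ``lower order'' grouping is slightly imprecise, but the inequality goes the right way), and the threshold $\varepsilon_a>\beta(m_1-m_2)$ enters precisely through the sign of $c$, mirroring the paper's $\varepsilon_c>0$. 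The two approaches are dual (your ansatz is the reciprocal of the paper's weight), but yours buys more: an explicit asymptotic envelope $|\mathcal{V}(t)|\lesssim c\,\mu^{m_2-m_1}(t)$ plus the transient $e^{-\varepsilon_a\int_{t_0}^t\mu^{m_1}}$, i.e.\ a convergence rate, whereas the paper's Lyapunov route is shorter and avoids variation-of-constants integrals. One small repair on your side: the supersolution property gives only the one-sided bound $\mathcal{V}-\mathcal{W}\le e^{-\varepsilon_a\int_{t_0}^t\mu^{m_1}(s)ds}\bigl(\mathcal{V}(t_0)-\mathcal{W}(t_0)\bigr)$, so the absolute-value inequality you state needs the companion subsolution (for $\varepsilon_b\ge 0$, comparison with the unforced equation gives $\mathcal{V}(t)\ge \mathcal{V}(t_0)e^{-\varepsilon_a\int_{t_0}^t\mu^{m_1}(s)ds}$, and the case $\varepsilon_b<0$ follows by symmetry), which is the ``sign bookkeeping'' you already flag; with that made explicit the proof is complete.
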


\begin{proof}
Consider the following transformation
\begin{align}
    \mathcal{V}_f={}\mu^{m_1-m_2}(t) \mathcal{V}, \label{vftrans}
\end{align}
which obeys the following dynamics
\begin{align}
    \dot{\mathcal{V}}_f={}&\frac{\beta (m_1-m_2) }{\mu(t)} \mathcal{V}_f-\varepsilon_a \xi^{m_1}(t) \mathcal{V}_f +\varepsilon_b \xi^{m_1}(t).
\end{align}
Consider the following Lyapunov function
\begin{align}
    \Upsilon={}\frac{1}{2} \mathcal{V}_f^2, \label{UpsLyap}
\end{align}
whose time derivative yields
\begin{equation}
    \dot{\Upsilon}={}\left(\frac{\beta (m_1-m_2)}{\mu(t)} -\varepsilon_a \mu^{m_1} (t)\right) \mathcal{V}_f^2+\varepsilon_b \mu^{m_1}(t)\mathcal{V}_f.
\end{equation}
Let $\varepsilon_a=\beta (m_1-m_2)+\varepsilon_c$ for any $\varepsilon_c >0$. Then, we get
\begin{align}
    \dot{\Upsilon} \leq {} -\varepsilon_c \mu^{m_1}(t) \mathcal{V}_f^2+\varepsilon_b \mu^{m_1} (t)\mathcal{V}_f, \label{upsdot}
\end{align}
for $m_1 \geq -1$. Performing Young's inequalities, we rewrite \eqref{upsdot} as
{
\setlength{\abovedisplayskip}{3pt}
\setlength{\belowdisplayskip}{3pt}
\begin{align}
    \dot{\Upsilon} \leq {} -\varepsilon_c \mu^{m_1}(t) \Upsilon+\frac{\varepsilon_b^2}{2\varepsilon_c} \mu^{m_1}(t). \label{Upsdotyo}
\end{align}}By comparison principle, we compute from \eqref{Upsdotyo} that
\begin{align}
    \Upsilon(t) &\leq {}e^{-\int_{t_0}^t \varepsilon_c\mu^{m_1}(\sigma)d\sigma }\Upsilon(t_0) \nonumber \\
    &+\frac{\varepsilon_b^2}{2\varepsilon_c} \int_{t_0}^{t} e^{-\int_{\varsigma}^{t} \varepsilon_c\mu^{m_1}(\sigma)d\sigma } \mu^{m_1}(\varsigma)d\varsigma, \nonumber \\
    \leq {}& \begin{cases}
e^{-\frac{\varepsilon_c}{(m_1+1)\beta} \left(\mu^{m_1+1}(t)-1 \right)}\Upsilon(t_0)+\frac{\varepsilon_b^2}{2\varepsilon_c^2},  \hfill \text{for} \, \, m_1>-1, \\
\mu^{-\varepsilon_c/\beta}(t)\Upsilon(t_0)+\frac{\varepsilon_b^2}{2\varepsilon_c^2},   \hfill \text{for} \, \, m_1=-1,
\end{cases}
\end{align}
from which we deduce the stability of $\Upsilon$ and thus,  from \eqref{UpsLyap}, the stability of  $\mathcal{V}_f$. In view of this fact, we conclude the asymptotic stability of $\mathcal{V}$ from \eqref{vftrans}. 
\end{proof}








\begin{thebibliography}{99}

\bibitem{boyd2004convex}
S.~P. Boyd and L.~Vandenberghe.
\newblock {\em Convex optimization}.
\newblock Cambridge university press, 2004.

\bibitem{dochain2011extremum}
D.~Dochain, M.~Perrier, and M.~Guay.
\newblock Extremum seeking control and its application to process and reaction systems: A survey.
\newblock {\em Mathematics and Computers in Simulation}, 82(3):369--380, 2011.

\bibitem{durr2013lie}
H.-B. D{\"u}rr, M.~S. Stankovi{\'c}, C.~Ebenbauer, and K.~H. Johansson.
\newblock Lie bracket approximation of extremum seeking systems.
\newblock {\em Automatica}, 49(6):1538--1552, 2013.

\bibitem{grushkovskaya2017extremum}
V.~Grushkovskaya, H.-B. D{\"u}rr, C.~Ebenbauer, and A.~Zuyev.
\newblock Extremum seeking for time-varying functions using lie bracket approximations.
\newblock {\em IFAC-PapersOnLine}, 50(1):5522--5528, 2017.

\bibitem{guay2007flatness}
M.~Guay, D.~Dochain, M.~Perrier, and N.~Hudon.
\newblock Flatness-based extremum-seeking control over periodic orbits.
\newblock {\em IEEE Transactions on Automatic Control}, 52(10):2005--2012, 2007.

\bibitem{haring2013extremum}
M.~Haring, N.~Van De~Wouw, and D.~Ne{\v{s}}i{\'c}.
\newblock Extremum-seeking control for nonlinear systems with periodic steady-state outputs.
\newblock {\em Automatica}, 49(6):1883--1891, 2013.

\bibitem{hazeleger2020extremum}
L.~Hazeleger, M.~Haring, and N.~van~de Wouw.
\newblock Extremum-seeking control for optimization of time-varying steady-state responses of nonlinear systems.
\newblock {\em Automatica}, 119:109068, 2020.

\bibitem{hu2015extremum}
B.~Hu, Y.~Li, F.~Cao, and Z.~Xing.
\newblock Extremum seeking control of cop optimization for air-source transcritical co2 heat pump water heater system.
\newblock {\em Applied Energy}, 147:361--372, 2015.

\bibitem{khalil2002nonlinear}
H.~K. Khalil.
\newblock {\em Nonlinear Systems}.
\newblock Prentice Hall, 2002.

\bibitem{krstic2000performance}
M.~Krsti{\'c}.
\newblock Performance improvement and limitations in extremum seeking control.
\newblock {\em Systems \& Control Letters}, 39(5):313--326, 2000.

\bibitem{krstic2014extremum}
M.~Krstic, A.~Ghaffari, and S.~Seshagiri.
\newblock Extremum seeking for wind and solar energy applications.
\newblock In {\em Proceeding of the 11th World Congress on Intelligent Control and Automation}, pages 6184--6193. IEEE, 2014.

\bibitem{krstic2000stability}
M.~Krsti{\'c} and H.-H. Wang.
\newblock Stability of extremum seeking feedback for general nonlinear dynamic systems.
\newblock {\em Automatica}, 36(4):595--601, 2000.

\bibitem{labar2022iss}
C.~Labar, C.~Ebenbauer, and L.~Marconi.
\newblock Iss-like properties in lie-bracket approximations and application to extremum seeking.
\newblock {\em Automatica}, 136:110041, 2022.

\bibitem{liu2012stochastic}
S.-J. Liu and M.~Krstic.
\newblock {\em Stochastic averaging and stochastic extremum seeking}.
\newblock Springer Science \& Business Media, 2012.

\bibitem{michael2023gradient}
E.~Michael, C.~Manzie, T.~A. Wood, D.~Zelazo, and I.~Shames.
\newblock Gradient free cooperative seeking of a moving source.
\newblock {\em Automatica}, 152:110948, 2023.

\bibitem{moshksar2015model}
E.~Moshksar, S.~Dougherty, and M.~Guay.
\newblock Model-based extremum-seeking control for unstable systems with time-varying extremum.
\newblock In {\em 2015 54th IEEE Conference on Decision and Control (CDC)}, pages 6960--6965. IEEE, 2015.

\bibitem{moura2013lyapunov}
S.~J. Moura and Y.~A. Chang.
\newblock Lyapunov-based switched extremum seeking for photovoltaic power maximization.
\newblock {\em Control Engineering Practice}, 21(7):971--980, 2013.

\bibitem{poveda2021fixed}
J.~I. Poveda and M.~Krsti{\'c}.
\newblock Fixed-time seeking and tracking of time-varying extrema.
\newblock In {\em 2021 60th IEEE Conference on Decision and Control (CDC)}, pages 108--113. IEEE, 2021.

\bibitem{sahneh2012extremum}
F.~D. Sahneh, G.~Hu, and L.~Xie.
\newblock Extremum seeking control for systems with time-varying extremum.
\newblock In {\em Proceedings of the 31st chinese control conference}, pages 225--231. IEEE, 2012.

\bibitem{scheinker2024100}
A.~Scheinker.
\newblock 100 years of extremum seeking: A survey.
\newblock {\em Automatica}, 161:111481, 2024.

\bibitem{scheinker2021extremum}
A.~Scheinker, E.-C. Huang, and C.~Taylor.
\newblock Extremum seeking-based control system for particle accelerator beam loss minimization.
\newblock {\em IEEE Transactions on Control Systems Technology}, 30(5):2261--2268, 2021.

\bibitem{scheinker2012extremum}
A.~Scheinker and M.~Krstic.
\newblock Extremum seeking-based tracking for unknown systems with unknown control directions.
\newblock In {\em 2012 IEEE 51st IEEE Conference on Decision and Control (CDC)}, pages 6065--6070. IEEE, 2012.

\bibitem{wang2000extremum}
H.-H. Wang and M.~Krstic.
\newblock Extremum seeking for limit cycle minimization.
\newblock {\em IEEE Transactions on Automatic control}, 45(12):2432--2436, 2000.

\bibitem{wepener2023extremum}
D.~A. Wepener, J.~D. le~Roux, and I.~K. Craig.
\newblock Extremum seeking control to optimize mineral recovery of a flotation circuit using peak air recovery.
\newblock {\em Journal of Process Control}, 129:103033, 2023.

\bibitem{ye2013extremum}
M.~Ye and G.~Hu.
\newblock Extremum seeking under input constraint for systems with a time-varying extremum.
\newblock In {\em 52nd IEEE Conference on Decision and Control}, pages 1708--1713. IEEE, 2013.

\bibitem{yilmaz2023presc}
C.~T. Yilmaz, M.~Diagne, and M.~Krstic.
\newblock Exponential and prescribed-time extremum seeking with unbiased convergence.
\newblock {\em arXiv preprint arXiv:2401.00300}, 2023.

\bibitem{yilmaz2023exponential}
C.~T. Yilmaz, M.~Diagne, and M.~Krstic.
\newblock Exponential extremum seeking with unbiased convergence.
\newblock In {\em 2023 62nd IEEE Conference on Decision and Control (CDC)}, pages 6749--6754. IEEE, 2023.

\bibitem{zhang2007source}
C.~Zhang, D.~Arnold, N.~Ghods, A.~Siranosian, and M.~Krstic.
\newblock Source seeking with non-holonomic unicycle without position measurement and with tuning of forward velocity.
\newblock {\em Systems \& control letters}, 56(3):245--252, 2007.

\end{thebibliography}


\end{document}